\documentclass{amsart}

\usepackage{amsmath,amssymb,amsthm}
\usepackage{a4wide}
\usepackage{hyperref}

\usepackage{graphicx}
\usepackage{xypic}
\entrymodifiers={+!!<0pt,\fontdimen22\textfont2>}
\usepackage[all]{xy}

\usepackage{multirow,bigdelim}

\newtheoremstyle{myremark} % name
    {7pt}                    % Space above
    {7pt}                    % Space below
    {}  	                 % Body font
    {}                           % Indent amount
    {\bf}       	         % Theorem head font
    {.}                          % Punctuation after theorem head
    {.5em}                       % Space after theorem head
    {}  % Theorem head spec (can be left empty, meaning ‘normal’)

\theoremstyle{plain}
\newtheorem{lemma}{Lemma}[section]
\newtheorem{theorem}[lemma]{Theorem}

\newtheorem{definition}[lemma]{Definition}

\newtheorem{proposition}[lemma]{Proposition}
\newtheorem{conjecture}[lemma]{Conjecture}

\theoremstyle{myremark}
\newtheorem{remark}[lemma]{Remark}
\newtheorem{example}[lemma]{Example}

\newcommand{\zet}{\mathbb{Z}}

\newcommand{\ind}{\mathrm{Ind}}
\newcommand{\htpyequiv}{\simeq}

\newcommand{\neck}{\mathrm{Neck}}
\newcommand{\prop}{\mathrm{Prop}}
\newcommand{\incl}{\hookrightarrow}
\renewcommand{\subset}{\subseteq}
\newcommand{\susp}{\Sigma}
\newcommand{\Michal}[1]{}

\newcommand{\empha}[1]{\textbf{#1}}		% a big emphasize
\newcommand{\nicep}{\mathcal{P}}
\newcommand{\nicei}{\mathcal{I}}

%===============================================================
\begin{document}
\title{Hard squares on cylinders revisited}

%================================
\author[Micha{\l} Adamaszek]{Micha{\l} Adamaszek}
\address{Mathematics Institute and DIMAP,
      \newline University of Warwick, Coventry, CV4 7AL, UK}
\email{aszek@mimuw.edu.pl}
\thanks{Research supported by the Centre for Discrete
        Mathematics and its Applications (DIMAP), EPSRC award EP/D063191/1.}
%===============================

\keywords{Independence complex, Euler characteristic, Hard squares}
\subjclass[2010]{}

\begin{abstract}
We consider the independence complexes of square grids with cylindrical boundary conditions. When one of the dimensions is small we use simple reductions induced by edge removals to show explicit natural homotopy equivalences between those spaces. In the second part we expand the results of Jonsson, who calculated the Euler characteristic of cylinders with odd circumference. We describe a series of results for cylinders of even circumference. Finally we define a completely independent combinatorial model (necklaces) which calculates the generating functions of the Euler characteristic of cylindrical grids. We conjecture that this model has some particularly simple structure.
\end{abstract}
%\date{Draft, \today.}
\maketitle

%========================================================================
\section{Introduction}
\label{sect:intro}

This paper is motivated by a recent collection of papers \cite{JonCylinder,JonRhombus,BLN,EngWitten,Thapper,JonDiag}, which in turn were motivated by some combinatorial questions in statistical physics, namely the properties of the Witten index in the so-called hard squares model \cite{Fen}. 

In the hard-core model on a grid we are given a graph $G$ and particles (fermions) located in the vertices of the grid, with the restrictions that two adjacent vertices cannot be occupied at the same time. That leads to considering the \emph{independent sets} in $G$, which represent the allowed states of the system. The family of all independent sets in $G$ naturally forms a simplicial complex, the \emph{independence complex} $\ind(G)$ of $G$. Various topological invariants of $\ind(G)$ correspond to physical characteristics of the underlying hard-core model. Among those are e.g. homology \cite{EngWitten,AdaSuper,Eer,Fen,HuSch2,JonGrids,HuOthers,HuSch1} or the Euler characteristic which is re-branded as the Witten index \cite{JonCylinder,JonRhombus,BLN,EngWitten,Thapper,JonDiag,Fen2}. Sometimes it is even possible to determine the exact homotopy type of $\ind(G)$ \cite{BLN,Thapper,HHFS,HuOthers}.

In this work we continue the research line of Jonsson studying these spaces for the square grids with various boundary conditions. Let $P_m$ denote the path with $m$ vertices and let $C_n$ be the cycle with $n$ vertices. The free square grid is $G=P_m\times P_n$, its cylindrical version is $G=P_m\times C_n$ and the toroidal one is $G=C_m\times C_n$. 

In the first part we show natural recursive dependencies in all three models. We concentrate mainly on cylinders.

\begin{theorem}
\label{thm:periods}
We have the following homotopy equivalences in the cylindrical case:
\begin{itemize}
\item[a)] $\ind(P_1\times C_n)\htpyequiv \susp\,\ind(P_1\times C_{n-3})$,
\item[b)] $\ind(P_2\times C_n)\htpyequiv \susp^2\,\ind(P_2\times C_{n-4})$,
\item[c)] $\ind(P_3\times C_n)\htpyequiv \susp^6\,\ind(P_3\times C_{n-8})$,
\item[d)] $\ind(P_m\times C_3)\htpyequiv \susp^2\,\ind(P_{m-3}\times C_3)$,
\item[e)] $\ind(P_m\times C_5)\htpyequiv \susp^2\,\ind(P_{m-2}\times C_5)$,
\item[f)] $\ind(P_m\times C_7)\htpyequiv \susp^6\,\ind(P_{m-4}\times C_7)$,
\end{itemize}
where $\susp$ denotes the unreduced suspension.
\end{theorem}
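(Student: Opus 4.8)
The plan is to prove every clause by chaining together three standard reductions of independence complexes while keeping track of how many suspensions are picked up. The reductions are: (1) the \emph{fold lemma} --- if $u\neq v$ and $N_H(u)\subseteq N_H(v)$ then $\ind(H)\htpyequiv\ind(H\setminus v)$, so in particular $\ind(H)$ is contractible whenever $H$ has an isolated vertex; (2) the \emph{vertex mapping cone} --- for any vertex $v$, $\ind(H)$ is the mapping cone of $\ind(H\setminus N_H[v])\incl\ind(H\setminus v)$, whence if $\ind(H\setminus v)$ is contractible then $\ind(H)\htpyequiv\susp\,\ind(H\setminus N_H[v])$ (this is the move that creates suspensions), while if $\ind(H\setminus N_H[v])$ is contractible then $\ind(H)\htpyequiv\ind(H\setminus v)$; and (3) the \emph{edge mapping cone} --- for an edge $ab$, putting $H_{ab}=H\setminus(N_H[a]\cup N_H[b])$, the complex $\ind(H\setminus\{ab\})$ is the mapping cone of $\susp\,\ind(H_{ab})\incl\ind(H)$, which when $\ind(H_{ab})$ is contractible gives $\ind(H\setminus\{ab\})\htpyequiv\ind(H)$ and in general links $\ind(H)$ and $\ind(H\setminus\{ab\})$ by a cofibre sequence. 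In each application of the suspension move the contractibility of $\ind(H\setminus v)$ is to be certified by folding $H\setminus v$ down until an isolated vertex surfaces.

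Assembling these: for (d)--(f) I would attack the top rows. Deleting one vertex of the top row of $P_m\times C_n$ (for $n=3,5,7$) triggers, thanks to the smallness of the cycle --- a row of $P_m\times C_3$ is a triangle, of $P_m\times C_5$ an induced pentagon --- a forced cascade of folds and suspension moves that strips off $3$, $2$, $4$ rows and leaves $\susp^{2}\ind(P_{m-3}\times C_3)$, $\susp^{2}\ind(P_{m-2}\times C_5)$, $\susp^{6}\ind(P_{m-4}\times C_7)$. For (a)--(c) the cyclic factor resists vertex deletion, so I would instead delete, one by one, the $m$ edges between two consecutive columns, passing to the grid $P_m\times P_n$, and then compare: the resulting cofibre sequence should be identified, term by term and after $j$ suspensions, with the one obtained the same way from $P_m\times C_{n-k}$, for the appropriate $(k,j)=(3,1),(4,2),(8,6)$; this reduces matters to the analogous recursions for paths and grids. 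Clause (a) is exactly the classical recursion $\ind(C_n)\htpyequiv\susp\,\ind(C_{n-3})$; it is the prototype, and (b) (on the circular ladder $P_2\times C_n$) and (c) follow its pattern.

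The whole substance of the proof is the combinatorial bookkeeping: ordering the deletions so that at \emph{every} stage one of reductions (1)--(3) honestly applies, and matching up the cofibre terms. The trap is that plausible shortcuts are false --- for example ``delete a degree-$2$ vertex $v$ and pass to $\susp\,\ind(H\setminus N_H[v])$'' is not a valid move: for $C_6$ it would give $\susp\,\ind(C_6\setminus N[v])=\susp\,\ind(P_3)\htpyequiv S^1$, whereas $\ind(C_6)\htpyequiv S^1\vee S^1$. So the suspension move may be invoked only after the witnessing isolated vertex has actually been produced. I expect parts (c) and (f) to carry the real weight: six suspensions must be extracted, so the reduction chains are long and the cofibre-matching delicate, though each remains a finite verification uniform in the unbounded parameter. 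Finally, the recursions hold only once $n$ (respectively $m$) is large enough for all the intermediate graphs to be as stated; the remaining small values are to be handled by direct computation.
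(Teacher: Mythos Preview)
Your proposal is a plan rather than a proof, and the plan for (a)--(c) has a real gap. You propose to cut the $m$ edges between two consecutive columns, pass to $P_m\times P_n$, and then ``identify cofibre sequences term by term'' with those for $P_m\times C_{n-k}$. But matching the terms of two cofibre sequences does not by itself yield a homotopy equivalence of the total spaces: you would need an actual map of cofibre sequences (i.e.\ a commuting ladder), and there is no evident map $P_m\times C_n\to P_m\times C_{n-k}$ of graphs inducing one. Moreover, removing $m$ edges produces an $m$-step iterated cofibre, not a single cofibre sequence, so the ``term by term'' comparison is not even well-posed as stated. This is not just missing bookkeeping; it is a missing idea.

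The paper's proof avoids this entirely by a trick you do not mention: edge \emph{insertion}. For (a)--(c) one first \emph{inserts} the $m$ edges $\{(i,0),(i,k)\}$ (with $k=4,5,9$ respectively), checking each time that the residue graph $G\setminus N[e]$ has contractible independence complex, and then \emph{removes} the original edges $\{(i,0),(i,1)\}$ and $\{(i,k-1),(i,k)\}$. The net effect is that the cylinder splits as a disjoint union $(P_m\times C_{n-k+1})\sqcup(P_m\times P_{k-1})$, whence $\ind$ becomes a join and the finite factor is a sphere of the right dimension. All the work is in checking contractibility of finitely many residue graphs, uniformly in $n$; this is where the configurations (A)--(D) of Lemma~\ref{lemma:simple2}.d enter. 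For (d)--(f) the paper likewise removes the edges between rows $r$ and $r+1$ (not vertices in the top row as you suggest), again splitting off a join factor. Your vertex-deletion cascade for (d)--(f) may well be salvageable, but as written it is only a hope (``triggers \ldots a forced cascade''), and for (f) in particular the paper's verification already requires a nontrivial two-stage argument that you have not supplied.
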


Here a) is a classical result of Kozlov \cite{Koz}, while d) and e) also follow from \cite{Thapper} where those spaces were identified with spheres by means of explicit Morse matchings. The results b), c) and f) are new and were independently proved by a different method in \cite{Kouyemon}. Note that a), b), c) are `dual' to, respectively, d), e) and f) in the light of Thapper's conjecture \cite[Conj. 3.1]{Thapper} (also \cite[Conj. 1.9]{Kouyemon}) that $\ind(P_m\times C_{2n+1})\htpyequiv \ind (P_n\times C_{2m+1})$. If one assumes the conjecture holds, then a), b) and c) are equivalent to, respectively, d), e) and f). Theorem \ref{thm:periods} together with an easy verification of initial conditions imply the conjecture for $m\leq 3$. The statements d), e) and f) take the periodicity of Euler characteristic, proved by Jonsson \cite{JonCylinder}, to the level of homotopy type.

According to the calculation of K. Iriye \cite{KouyemonPriv} there is an equivalence $\ind(P_4\times C_{2k+1})\htpyequiv\ind(P_k\times C_9)$ and both spaces are, up to homotopy, wedges of spheres. However, the number of wedge summands grows to infinity as $k\to\infty$, so no recursive relation as simple as those in Theorem~\ref{thm:periods} is possible for $\ind(P_m\times C_9)$ nor $\ind(P_4\times C_n)$.

Let us also mention that a completely analogous method proves the following.
\begin{proposition}
\label{prop:other}
We have the following homotopy equivalences in the free and toroidal cases:
\begin{itemize}
\item $\ind(P_1\times P_n)\htpyequiv \susp\,\ind(P_1\times P_{n-3})$,
\item $\ind(P_2\times P_n)\htpyequiv \susp\,\ind(P_2\times P_{n-2})$,
\item $\ind(P_3\times P_n)\htpyequiv \susp^3\,\ind(P_3\times P_{n-4})$,
\item $\ind(C_3\times C_n)\htpyequiv \susp^2\,\ind(C_3\times C_{n-3})$.
\end{itemize}
\end{proposition}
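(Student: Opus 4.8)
The plan is to mimic, for these four families, the edge-removal reductions used to prove Theorem~\ref{thm:periods}. Two facts drive everything. First, for a vertex $v$ of a graph $G$ the complex $\ind(G)$ is obtained from $\ind(G\setminus v)$ by coning off the subcomplex $\ind(G\setminus N_G[v])$ at the vertex $v$, so there is a homotopy cofibre sequence $\ind(G\setminus N_G[v])\incl\ind(G\setminus v)\to\ind(G)$; in particular, if $\ind(G\setminus v)$ is contractible then $\ind(G)\htpyequiv\susp\,\ind(G\setminus N_G[v])$, and if $\ind(G\setminus N_G[v])$ is contractible then $\ind(G)\htpyequiv\ind(G\setminus v)$. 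Second, $\ind(G_1\sqcup G_2)=\ind(G_1)*\ind(G_2)$, so a graph with an isolated vertex has conical --- hence contractible --- independence complex, and $\ind(G\sqcup P_2)=\susp\,\ind(G)$. Accordingly, each reduction step deletes a vertex $v$ chosen so that either $G\setminus v$ or $G\setminus N_G[v]$ acquires an isolated vertex; when the minimum degree is too large for this to happen at once, one first removes a few edges, by the reductions used for Theorem~\ref{thm:periods}, to create the required low-degree vertices.

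The first bullet is Kozlov's argument: deleting the vertex $2$ of $P_n=1-2-\cdots-n$ isolates the vertex $1$, so $\ind(P_n\setminus 2)\htpyequiv *$, while $P_n\setminus N[2]=P_{n-3}$, giving $\ind(P_n)\htpyequiv\susp\,\ind(P_{n-3})$. For the second bullet, label the two vertices in column $j$ of $P_2\times P_n$ by $a_j$ (top) and $b_j$ (bottom). In $(P_2\times P_n)\setminus N[b_2]$ the corner $a_1$ has lost both its neighbours $b_1$ and $a_2$, hence is isolated, so that complex is contractible and $\ind(P_2\times P_n)\htpyequiv\ind((P_2\times P_n)\setminus b_2)$. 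In this last graph $b_1$ is a pendant vertex on $a_1$, so deleting $a_1$ isolates $b_1$; therefore $\ind((P_2\times P_n)\setminus b_2)\htpyequiv\susp\,\ind((P_2\times P_n)\setminus\{a_1,b_1,a_2,b_2\})$, and $\{a_1,b_1,a_2,b_2\}$ is precisely the first two columns, so the right-hand side is $\susp\,\ind(P_2\times P_{n-2})$. This single suspension matches the statement.

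The last two bullets follow the same template, but a preliminary edge-removal step is now genuinely required, since $P_3\times P_n$ and $C_3\times C_n$ have minimum degree $2$ and $4$, too large for the moves above to apply to the full grid. For $P_3\times P_n$ one removes a short list of edges near the first four columns, creating pendant and isolated vertices, and then peels those four columns off by three suspension moves interleaved with a few folds, arriving at $P_3\times P_{n-4}$; the pattern parallels Theorem~\ref{thm:periods}(c). For $C_3\times C_n$ one exploits that each column is a triangle: deleting one vertex of a column leaves an edge, with independence complex $S^0$, and deleting a closed neighbourhood severs the cyclic direction. Here, unlike the path cases, one cannot peel from an end, so the sequence of deletions must respect the cyclic symmetry, and the intermediate complexes are ``decorated'' tori that become $C_3\times C_{n-3}$ only after the folds; this is the toroidal counterpart of Theorem~\ref{thm:periods}(d), and accordingly produces $\susp^2$.

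The genuinely delicate part is the combinatorial bookkeeping in the last two families: the sequence of edge and vertex deletions must be chosen so that at every stage some reduction of the above form applies, so that the process terminates at \emph{exactly} the smaller grid claimed rather than at something merely homotopy equivalent to it, and so that the number of suspension moves is exactly $3$, respectively $2$. One should also verify the degenerate boundary cases under the convention $\ind(\emptyset)=S^{-1}$ --- for instance $\ind(P_2\times P_2)=\ind(C_4)=S^0=\susp\,\ind(\emptyset)$ --- which is a finite check.
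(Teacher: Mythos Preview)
Your arguments for the first two bullets are correct and, up to cosmetic differences, match the paper's. For $P_1\times P_n$ the paper removes the edge $\{3,4\}$ (residue has $1$ isolated) rather than the vertex $2$, and for $P_2\times P_n$ the paper simply invokes Lemma~\ref{lemma:simple2}.c); your two-step vertex argument is precisely the proof of that lemma specialised to the corner square.

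For the last two bullets, however, you have not given a proof. You correctly identify the strategy---edge removals followed by a splitting---but you never say \emph{which} edges to remove, nor do you verify that any residue graph is contractible. Phrases like ``one removes a short list of edges near the first four columns'' and ``the intermediate complexes are `decorated' tori that become $C_3\times C_{n-3}$ only after the folds'' are descriptions of what a proof would have to accomplish, not a proof. You yourself flag this as ``the genuinely delicate part'' and then leave it undone. The whole content of these two statements lies in exhibiting a concrete sequence of removable/insertable edges and checking the contractibility conditions; without that, nothing has been shown.

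For comparison, the paper's (admittedly sketchy) argument is explicit on exactly this point. For $P_3\times P_n$ it removes the three edges $\{(i,4),(i,5)\}$, $i=1,2,3$, noting that each residue graph contains (or reduces to) a configuration of type C or D from Lemma~\ref{lemma:simple2}.d); this disconnects the graph as $P_3\times P_4\sqcup P_3\times P_{n-4}$, and $\ind(P_3\times P_4)\simeq S^2$ supplies the $\susp^3$. For $C_3\times C_n$ it first \emph{inserts} the three edges $\{(i,0),(i,4)\}$, $i=0,1,2$ (each residue contains a type~C configuration), and then removes the now-redundant edges to split off $C_3\times P_3$, whose independence complex is $S^1$. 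Note in particular that the suspensions arise from the join with a fixed small factor, not from ``three suspension moves interleaved with a few folds'' as you suggest; your proposed peeling mechanism for $P_3\times P_n$ is a different route and would itself require a separate verification.
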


All those results are proved in Section \ref{sect:simplesuspension}.

In the second part of this work we aim to provide a method of recursively calculating the Euler characteristic in the cylindrical case $P_m\times C_n$ when the circumference $n$ is even. Since it is customary to use the Witten index in this context, we will adopt the same approach and define for any space $X$
$$Z(X)=1-\chi(X)$$
where $\chi(X)$ is the unreduced Euler characteristic. Then $Z(X)=0$ for a contractible $X$, $Z(\susp\,X)=-Z(X)$ for any finite simplicial complex $X$, and $Z(S^k)=(-1)^{k-1}$. Then the value
$$Z(G):=Z(\ind(G))$$
is what is usually called the Witten index of the underlying grid model.

Table 1 in the appendix contains some initial values of $Z(P_m\times C_n)$ arranged so that $m$ labels the rows and $n$ labels the columns of the table. Let
$$f_n(t)=\sum_{m=0}^\infty Z(P_m\times C_{n})t^m$$
be the generating function of the sequence in the $n$-th column. By an ingenious matching Jonsson \cite{JonCylinder} computed the numbers $Z(P_m\times C_{2n+1})$ for odd circumferences and found that for each fixed $n$ they are either constantly $1$ or periodically repeating $1,1,-2,1,1,-2,\ldots$. Precisely
\begin{eqnarray*}
f_{6n+1}(t)=f_{6n-1}(t)&= &\frac{1}{1-t},\\
f_{6n+3}(t)&=&\frac{1-2t+t^2}{1-t^3}.
\end{eqnarray*}
The behaviour of $Z(P_m\times C_{2n})$ is an open problem of that work, which we tackle here. Also, recently Braun notes that some problems faced in \cite{Braun} are reminiscent of the difficulty of determining the homotopy types of the spaces $\ind(P_m\times C_{2n})$.

Our understanding of the functions $f_{2n}(t)$ comes in three stages of increasing difficulty.
\begin{theorem}
\label{thm:genfun}
Each $f_{2n}(t)$ is a rational function, such that all zeroes of its denominator are complex roots of unity.
\end{theorem}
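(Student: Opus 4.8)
The plan is to realize the generating function $f_{2n}(t)$ as the trace of powers of a transfer matrix, and then argue that this matrix can be arranged to be (after a base change) a matrix whose characteristic polynomial divides a product of cyclotomic polynomials.

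Concretely, I would first set up a \emph{transfer-matrix} description of $Z(P_m\times C_n)$ for fixed circumference $n$. The independent sets of $P_m\times C_n$ can be built column by column: a state is an independent set $S$ of $C_n$ (a valid single column), and two consecutive columns $S,S'$ are compatible when $S\cap S'=\emptyset$. The Euler characteristic is an alternating count, $\chi(\ind(G))=-\sum_{I\neq\emptyset}(-1)^{|I|}$ summed over independent sets $I$, so $Z(P_m\times C_n)=\sum_{I}(-1)^{|I|}$ over \emph{all} independent sets including the empty one. Writing $I$ as a sequence of columns $S_1,\dots,S_m$, this sum factorizes: define the $\nicei\times\nicei$ matrix $T$ indexed by the independent sets of $C_n$, with $T_{S,S'}=(-1)^{|S'|}$ if $S\cap S'=\emptyset$ and $0$ otherwise (and a boundary vector absorbing the weight of the first column). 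Then $Z(P_m\times C_n)$ is a fixed linear combination of the entries of $T^{m}$ (or $T^{m-1}$ times boundary vectors), hence $f_{2n}(t)=\sum_m Z(P_m\times C_{2n})t^m$ is a rational function whose denominator divides $\det(I-tT)$, the reciprocal characteristic polynomial of $T$. This already gives rationality.

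The substantive point is that every eigenvalue of $T$ is a root of unity (equivalently, zero, but zero eigenvalues do not contribute poles). The key structural fact I would exploit is Theorem~\ref{thm:periods}, parts d), e), f) in spirit — and more fundamentally the periodicity phenomenon behind them. Those homotopy equivalences say $Z(P_m\times C_n)$ satisfies, for small $n$, a linear recurrence in $m$ of the form $Z(P_m\times C_n)=\pm Z(P_{m-k}\times C_n)$, whose characteristic polynomial $x^k\mp 1$ has only roots of unity as roots. For general even $n$ one cannot expect such a clean recurrence, but one can still hope to show that the nonzero part of the spectrum of $T$ consists of roots of unity. The route I would take is to show that $T$, restricted to the complement of its generalized $0$-eigenspace, is conjugate to a matrix of finite order, i.e. $T^N = T^{N+d}$ for suitable $N,d$ acting on $\mathrm{im}(T^N)$; equivalently that the sequence $Z(P_m\times C_{2n})$ is \emph{eventually periodic} in $m$. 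Eventual periodicity of an integer sequence satisfying a linear recurrence with integer coefficients forces the relevant eigenvalues to be roots of unity (a Kronecker-type argument: the companion matrix of the minimal recurrence, on the non-nilpotent part, has bounded powers, hence finite order, hence cyclotomic minimal polynomial).

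So the crux reduces to: \emph{the Witten index $Z(P_m\times C_{2n})$ is eventually periodic in $m$.} I expect this to be the main obstacle, and I would prove it by a topological stabilization argument in the style of the proofs of Theorem~\ref{thm:periods} and Proposition~\ref{prop:other}. The idea is that adding $2n$ (or a bounded multiple thereof) columns to the cylinder $P_m\times C_{2n}$ changes $\ind$ only by a fixed number of suspensions, \emph{once $m$ is large enough}, because the combinatorial "fold and delete a simplicial vertex" reductions used there — removing edges from the grid, then deleting vertices dominated in the resulting complex — reach a stable configuration. Each suspension multiplies $Z$ by $-1$, and a bounded number of them gives a relation $Z(P_{m+L}\times C_{2n}) = (-1)^{e(n)}Z(P_m\times C_{2n})$ for $m\gg 0$, with $L$ and the parity $e(n)$ depending only on $n$. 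That yields eventual periodicity with period $2L$, hence the denominator of $f_{2n}(t)$ divides $t^{\deg}\cdot(1-(-1)^{e(n)}t^L)$ times a power of $t$ coming from the pre-periodic part — and every root of that polynomial is a root of unity. The technical work is to verify that the chain of edge-removal/vertex-deletion reductions indeed stabilizes for $m$ large; this is where the analysis will be most delicate, since unlike the small-$n$ cases one does not get a recursion valid for all $m$, only for $m$ past a threshold, and one must control what happens near the two ends of the cylinder uniformly in $m$.
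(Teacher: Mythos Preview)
Your transfer-matrix setup is fine and immediately yields rationality of $f_{2n}(t)$. The gap is at the ``crux'': you reduce everything to showing that $Z(P_m\times C_{2n})$ is \emph{eventually periodic} in $m$, and propose to obtain this from a stabilization $\ind(P_{m+L}\times C_{2n})\htpyequiv\susp^{e}\ind(P_m\times C_{2n})$ for $m\gg 0$. But eventual periodicity is simply false when $4\mid n$. Already for $n=4$ the column of Table~1 reads $1,-1,3,-3,5,-5,7,-7,\ldots$, and Table~2 gives
\[
f_4(t)=\frac{-(t^2+1)}{(t-1)(t+1)^2}
\]
with a \emph{double} pole at $t=-1$; the sequence has genuine linear growth. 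This is precisely the content of Conjecture~\ref{conjecture:periodic}: for $n\equiv 0\pmod 4$ the poles are expected to be double, and even the simple-pole (periodicity) statement for $n\equiv 2\pmod 4$ is only conjectural---the paper explicitly says its methods are unlikely to settle it. Consequently no homotopy equivalence of the shape you propose can exist for general even $n$, and your Kronecker-type step (bounded powers $\Rightarrow$ finite order on the non-nilpotent part) fails: when $4\mid n$ the transfer matrix has a nontrivial Jordan block at a root of unity and is \emph{not} of finite order there. You are trying to prove something strictly stronger than Theorem~\ref{thm:genfun}, and that stronger statement is false.

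The paper's argument is entirely different and bypasses both the transfer matrix and any periodicity claim. It introduces auxiliary $2\times n$ bit-patterns $\nicep$ encoding modifications of the top two rows of $P_m\times C_n$, an integer invariant $\mu(\nicep)\in\{0,\ldots,\lfloor n/4\rfloor\}$, and local operations $V,N,R$ under which $Z(\nicep;m)$ obeys simple linear identities (Lemmas~\ref{lem:zvn}, \ref{lem:zr}). The key structural facts are that $V$ strictly decreases $\mu$ while $N$ and $R$ preserve it (Propositions~\ref{prop:propervn}, \ref{prop:properr}). Following $N$- and $R$-edges through the finite set of proper patterns at a fixed $\mu$-level one must hit a cycle, producing a recursion
\[
Z(\nicep;m)=\pm Z(\nicep;m-a)+\sum_{\mathcal R}\pm Z(\mathcal R;m-b_{\mathcal R})
\]
with every $\mathcal R$ of strictly smaller $\mu$. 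Induction on $\mu$ then shows that each step adds one factor $1\pm t^a$ to the denominator of $f_\nicep$, so all poles are roots of unity---with no control over their multiplicities, which is exactly what your approach would have needed but cannot have.
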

Our method also provides an algorithm to calculate $f_{2n}(t)$, see Appendix. This already implies that for each fixed $n$ the sequence $a_m=Z(P_m\times C_{2n})$ has polynomial growth. However, we can probably be more explicit:

\begin{conjecture}
\label{thm:genfunmedium}
For every $n\geq 0$ we have
\begin{eqnarray*}
f_{4n+2}(t)&= &\frac{h_{4n+2}(t)}{(1+t^2)\cdot\big[(1-t^{8n-2})(1-t^{8n-8})(1-t^{8n-14})\cdots(1-t^{2n+4})\big]},\\
f_{4n}(t)&=&\frac{h_{4n}(t)}{(1-t^2)\cdot\big[(1-t^{8n-6})(1-t^{8n-12})(1-t^{8n-18})\cdots(1-t^{2n+6})\big]}.
\end{eqnarray*}
for some polynomials $h_{n}$.
\end{conjecture}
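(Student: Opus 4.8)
\textit{Proof proposal.} The plan is to represent $f_{2n}(t)$ by a transfer matrix running along the $P_m$-direction, and then to sharpen Theorem~\ref{thm:genfun} from the qualitative statement ``all poles are roots of unity'' to the precise multiset of poles encoded in the conjectured denominator. Start from the elementary identity $Z(G)=\sum_{I}(-1)^{|I|}$, the sum taken over all independent sets $I$ of $G$ including $I=\emptyset$ (immediate from $Z=1-\chi$ and the definition of $\chi$). An independent set of $P_m\times C_{2n}$ is a sequence $S_1,\dots,S_m$ of independent sets of $C_{2n}$ with $S_i\cap S_{i+1}=\emptyset$, so with $T$ the $\mathcal{I}_{2n}\times\mathcal{I}_{2n}$ matrix $T_{S',S}=(-1)^{|S'|}\,[\,S\cap S'=\emptyset\,]$, the vector $w$ given by $w_S=(-1)^{|S|}$, and $u=\mathbf 1$, one has $Z(P_m\times C_{2n})=\langle u,\,T^{m-1}w\rangle$ for $m\ge 1$, hence
\[
f_{2n}(t)=Z(P_0\times C_{2n})+t\,\langle u,(I-tT)^{-1}w\rangle .
\]
Thus $f_{2n}$ is rational, its poles lie among the reciprocals of the eigenvalues of $T$ that are ``visible'' from the pair $(u,w)$, and the conjecture is equivalent to the following: the sequence $m\mapsto Z(P_m\times C_{2n})$ eventually satisfies the linear recurrence whose characteristic polynomial is $(x^2+1)\prod_{j}(x^{8n-2-6j}-1)$ (respectively $(x^2-1)\prod_j(x^{8n-6-6j}-1)$). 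Note that we only need an \emph{upper} bound on the poles --- the denominator in the conjecture need not be in lowest terms --- so it suffices to show that every visible eigenvalue has order dividing one of the listed exponents or equal to $4$ (resp.\ $2$), and that the visible part is semisimple (no Jordan block), which already forces every pole to be simple.

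The dihedral group $D_{2n}$ acts on $\mathcal{I}_{2n}$ and commutes with $T$, and both $u$ and $w$ are $D_{2n}$-invariant; hence the whole relevant dynamics takes place on the $D_{2n}$-fixed subspace, and more to the point on the cyclic subspace $W=\operatorname{span}\{T^{k}w:k\ge 0\}$. The remark after Theorem~\ref{thm:genfun} (polynomial growth of each column) says that $\dim W$ is only polynomial in $n$, in sharp contrast with the exponentially many independent sets of $C_{2n}$; this $W$ is essentially the ``necklace'' space studied in the third part of the paper. So the task reduces to proving that $T$ acts on $W$ as an operator of finite order whose eigenvalues have the orders prescribed above.

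\textbf{The main obstacle is exactly this last point.} I would try to produce an explicit $T$-equivariant bijection between a reduced form of the generating set $\{T^{k}w\}$ and a set on which ``add one column'' is visibly a disjoint union of cyclic shifts of sizes among $8n-2,8n-8,\dots,2n+4$ (together with a small configuration of size $4$ or $2$), in the same spirit as Jonsson's matching for odd circumference \cite{JonCylinder} and the edge-removal reductions of Section~\ref{sect:simplesuspension}; the arithmetic progressions with common difference $6$ are a strong hint, echoing the ``$C_{n-8}$''/``$C_{n-6}$''-type shifts of Theorem~\ref{thm:periods}. Should no clean bijection be available, the fallback is a discrete Morse matching on $\ind(P_m\times C_{2n})$ whose critical cells, organised by $m$, assemble into a chain complex on which the transfer map is the claimed permutation; semisimplicity would then be automatic. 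The dichotomy between the factors $1+t^2$ (circumference $\equiv 2\bmod 4$) and $1-t^2$ (circumference $\equiv 0\bmod 4$) should fall out of the behaviour of a half-turn-symmetric configuration. Proving that no eigenvalue of larger order survives, and that nothing is left over, is where essentially all the difficulty lies, which is why the statement is only conjectural.

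Granting the spectral description, the numerators are immediate: setting $D_{2n}(t)$ to be the conjectured denominator and $h_{2n}(t):=f_{2n}(t)\,D_{2n}(t)$, the latter is a polynomial because $D_{2n}$ already vanishes at every pole of $f_{2n}$ to the required (simple) order, and its finitely many coefficients are then read off from initial values $Z(P_m\times C_{2n})$ computed by the transfer matrix above, equivalently by the algorithm of the Appendix. If one wants in addition a bound such as $\deg h_{2n}<\deg D_{2n}$, it follows from the obvious degree estimate for the rational function $t\,\langle u,(I-tT)^{-1}w\rangle$ in terms of $\dim W$.
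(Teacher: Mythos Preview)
This statement is a \emph{Conjecture} in the paper, not a theorem; the paper does not prove it either. What the paper supplies is a reduction to a purely combinatorial sub-conjecture. After introducing proper patterns and their $\mu$-invariant (Section~\ref{sect:patterns}), the paper builds a finite directed graph $\neck(k,n)$ of ``necklaces'' (Section~\ref{section:neck}) and proves (Theorem~\ref{thm:necklace-to-patterns}) that if $g_{k,n}$ is any common multiple of the cycle lengths of $\neck(k,n)$ then $f_n$ has a denominator dividing $(1-(-1)^{n/2}t^2)\prod_{k\ge 1}(1-t^{2g_{k,n}})$. Conjecture~\ref{con:cyc-len} asserts that every cycle of $\neck(k,n)$ has length dividing $n-3k$; plugging in $g_{k,n}=n-3k$ then yields exactly the exponents listed in Conjecture~\ref{thm:genfunmedium}. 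So the paper's contribution towards this conjecture is a reduction to a concrete, computer-checkable statement about an explicit combinatorial dynamical system.

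Your transfer-matrix setup is a legitimate reformulation in the same ($P_m$-)direction, but it stays at the level of linear algebra: you want to bound the orders of the visible eigenvalues of $T$ on the cyclic subspace $W$. What the paper's pattern/necklace machinery buys is precisely an explicit, polynomially-sized combinatorial model for (a quotient of) that cyclic subspace, on which the action of ``add one row'' becomes the necklace map of Definition~\ref{def:necklace-graph}. The ``explicit $T$-equivariant bijection \dots\ on which `add one column' is visibly a disjoint union of cyclic shifts'' that you hope for is essentially what the necklace construction \emph{is}; the remaining unproved ingredient, the divisibility of those cycle lengths, is exactly Conjecture~\ref{con:cyc-len}.

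There is also a concrete flaw in your sufficiency criterion. You propose to show that the visible part of $T$ is semisimple, so that all poles of $f_{2n}$ would be simple. But Conjecture~\ref{conjecture:periodic}, supported by Table~2 (e.g.\ the reduced denominator of $f_{12}$ contains $\Phi_2(t)^2\Phi_6(t)^2$), says that for circumference $\equiv 0\pmod 4$ the poles are genuinely \emph{double} and the column sequences grow linearly. Hence $T$ is not expected to be semisimple on $W$ in that case. The conjectured denominator still accommodates these double poles because its factors share many roots, but your argument would have to bound Jordan-block sizes and match them against those multiplicities, not assert semisimplicity.
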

In the denominators the exponents decrease by $6$.

Conjecture \ref{thm:genfunmedium} is in fact still just the tip of an iceberg, because $p_n(t)$ turn out to have lots of common factors with the denominators. This leads to the grande finale:
\begin{conjecture}
\label{conjecture:periodic}
After the reduction of common factors:
\begin{itemize}
\item $f_{4n+2}(t)$ can be written as a quotient whose denominator has no multiple zeroes. Consequently, for any fixed $n$, the sequence $a_m=Z(P_m\times C_{4n+2})$ is periodic.
\item $f_{4n}(t)$ can be written as a quotient whose denominator has only double zeroes. Consequently, for any fixed $n$, the sequence $a_m=Z(P_m\times C_{4n})$ has linear growth.
\end{itemize}
\end{conjecture}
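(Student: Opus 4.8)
\medskip
\noindent\textit{Towards Conjecture~\ref{conjecture:periodic}.}
The plan is to phrase everything through the transfer matrix that already yields Theorem~\ref{thm:genfun}. Writing $I_G(x)=\sum_{S\ \mathrm{indep.}}x^{|S|}$ for the independence polynomial one has $Z(G)=I_G(-1)$, and decomposing an independent set of $P_m\times C_{2n}$ into its $m$ columns (each a copy of $C_{2n}$) gives
\[
Z(P_m\times C_{2n})=\langle u,\,T_{2n}^{\,m}\,v\rangle,
\]
where $T_{2n}$ is the $L_{2n}\times L_{2n}$ integer matrix, rows and columns indexed by independent sets $S$ of $C_{2n}$, with entry $(-1)^{|S'|}$ if $S\cap S'=\emptyset$ and $0$ otherwise, and $u,v$ are explicit $\{0,\pm1\}$ boundary vectors. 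Hence $f_{2n}(t)=\langle u,(I-tT_{2n})^{-1}v\rangle$, and after cancellation its reduced denominator $Q_{2n}(t)$ is the reciprocal polynomial of the monic $q$ of least degree with $\langle u,q(T_{2n})T_{2n}^{\,k}v\rangle=0$ for all $k\ge0$. The multiplicity of a root $\zeta$ of $q$ is at most the size of the largest Jordan block of $T_{2n}$ at the eigenvalue $\zeta$ (and generically equals it), and by Theorem~\ref{thm:genfun} every such $\zeta$ is a root of unity. Expanding $f_{2n}$ in partial fractions, $Z(P_m\times C_{2n})$ agrees for large $m$ with a quasi-polynomial in $m$ of degree one less than the largest Jordan-block size appearing. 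Thus Conjecture~\ref{conjecture:periodic} becomes a purely linear-algebraic assertion: for $C_{4n+2}$ every Jordan block of $T_{4n+2}$ visible to the pair $(u,v)$ has size $1$, while for $C_{4n}$ the visible blocks have size $\le2$ with at least one of size exactly $2$. (Getting honest, rather than eventual, periodicity additionally needs $\deg Q_{4n+2}>\deg(\text{numerator})$, which is routine bookkeeping once the pole structure is known.)

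For the first bullet I would try to prove outright that $T_{4n+2}$ is diagonalizable. Two routes suggest themselves. The first is to produce a nondegenerate bilinear (or Hermitian) pairing on the span of the independent sets of $C_{4n+2}$ that $T_{4n+2}$ respects and with respect to which it is \emph{normal}; the cyclic symmetries of $C_{2n}$ and the reflection through a pair of antipodal vertices are the natural candidates, and — this is where the residue class of $n$ modulo $2$ must enter — the fixed-configuration pattern of that central reflection differs between $C_{4n+2}$ and $C_{4n}$, which I expect forces the pairing to be (block-)definite in the first case but to degenerate in a one-dimensional way in the second. The second, more combinatorial, route is to use the edge-removal/fold reductions behind Theorem~\ref{thm:periods} to conjugate $T_{2n}$ into a block-triangular form whose large diagonal block is a signed permutation matrix (hence of finite order, hence diagonalizable with root-of-unity eigenvalues, matching Jonsson's odd-circumference picture) plus a small, explicitly described remainder, and then to analyse only the remainder. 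In both routes the odd-circumference analogue — Jonsson's periodicity, equivalently the fact that $T_{2k+1}$ is essentially a signed permutation — is the model one is trying to perturb.

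For the second bullet one runs the same structural split, now aiming to show that the non-semisimple part of $T_{4n}$ visible to $(u,v)$ is a direct sum of $2\times2$ blocks $\left(\begin{smallmatrix}\zeta&1\\0&\zeta\end{smallmatrix}\right)$ with $\zeta$ a root of unity; partial fractions then gives $Z(P_m\times C_{4n})=(\text{periodic in }m)+m\cdot(\text{periodic in }m)$, i.e.\ linear growth. Here one must also rule out Jordan blocks of size $\ge3$ and check that at least one size-$2$ block genuinely meets the cyclic subspace pinched by $(u,v)$, so that the growth is not accidentally bounded; both should drop out of whatever normal form the first step delivers.

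The main obstacle, whichever route one takes, is \emph{uniformity in $n$}: the transfer matrix $T_{2n}$ has size the Lucas number $L_{2n}$ and grows exponentially, so no finite check settles the conjecture, and one needs a decomposition of $T_{2n}$ that is stable as $n$ grows and that explains \emph{structurally} why $C_{4n+2}$ and $C_{4n}$ behave differently — equivalently, why the cancellations in $\det(I-tT_{2n})$ conspire to leave simple poles in one family and double poles in the other. I expect the clean vehicle for this is the necklace model developed later in the paper: if the necklace enumeration indeed reproduces $f_{2n}(t)$ and has the conjectured simple shape, then the Jordan-structure statements above, together with the modulo-$4$ dichotomy (presumably governed by the parity of the number of beads), should follow by a direct count. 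Short of such a model, controlling those cancellations uniformly in $n$ is exactly what makes this the hardest of the three stages, and it is why we state it only as a conjecture.
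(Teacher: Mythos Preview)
This statement is a \emph{conjecture} in the paper, not a theorem: the paper offers no proof, only numerical evidence (the table of periods and the explicit reduced forms of $f_2,\ldots,f_{22}$ in the Appendix), and it says outright that ``it seems unlikely that the methods of this paper will be sufficient to prove it.'' Your write-up is consistent with this status --- you label it ``Towards Conjecture~\ref{conjecture:periodic}'' and end by explaining why it remains open --- so there is no proof to compare against, only approaches.

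That said, two points about your framing are worth flagging. First, the paper does \emph{not} obtain Theorem~\ref{thm:genfun} from a transfer matrix; it obtains it from the pattern calculus of Section~\ref{sect:patterns} (the $V$, $N$, $R$ operations and the $\mu$-invariant), and the denominators come from cycle lengths in the directed graph of proper patterns (equivalently, in the necklace graph $\neck(k,n)$). So your opening sentence ``the transfer matrix that already yields Theorem~\ref{thm:genfun}'' misdescribes the paper's argument. A transfer-matrix proof of rationality is of course available, but the paper's point of view is orthogonal to it; indeed the one place the paper mentions transfer matrices is to say ``this paper is not about them.'' Second, your closing hope that ``the necklace model developed later in the paper'' will deliver the Jordan-block dichotomy runs against the paper's own assessment: the necklace model is designed to control the \emph{unreduced} denominators (Conjecture~\ref{thm:genfunmedium}), and the paper explicitly doubts it can see the further cancellations that Conjecture~\ref{conjecture:periodic} is about.

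Your transfer-matrix/Jordan-structure outline is a genuinely different angle from anything in the paper, and the reformulation ``visible Jordan blocks have size $1$ (resp.\ $\le 2$)'' is correct and clean. But as you acknowledge, the substantive steps --- producing the invariant pairing, explaining why the mod-$4$ residue governs its degeneracy, or conjugating $T_{2n}$ to a signed-permutation-plus-remainder form uniformly in $n$ --- are all conjectural. So the proposal is an honest research plan rather than a proof, which matches the paper's treatment of the statement.
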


A direct computation shows that the first part holds for a number of initial cases, with periods given by the table:
\begin{center}
\begin{tabular}{l||l|l|l|l|l|l}
$4n+2$ & 2 & 6 & 10 & 14 & 18 & 22\\ \hline
period & 4 & 12 & 56 & 880 & 360 & 276640
\end{tabular}
\end{center}

In Section \ref{sect:patterns} we prepare our main tool for the proof of Theorem~\ref{thm:genfun}: \emph{patterns} and their $\mu$-invariants. A small example of how they work is presented in detail in Section~\ref{sect:example6}. The proof of Theorem~\ref{thm:genfun} then appears in Section~\ref{sect:weakproof}. In Section~\ref{section:neck} we describe a completely independent combinatorial object, the \emph{necklace graph}, which is a simplified model of interactions between patterns. It has some conjectural properties, esp. Conjecture~\ref{con:cyc-len}, whose verification would prove Conjecture~\ref{thm:genfunmedium}. Section~\ref{section:neck}, up to and including Conjecture~\ref{con:cyc-len}, can be read without any knowledge of any other part of this paper. As for Conjecture~\ref{conjecture:periodic}, it seems unlikely that the methods of this paper will be sufficient to prove it.

To avoid confusion we remark that our results are in a sense orthogonal to some questions raised by Jonsson, who asked if the sequence in each \emph{row} of Table 1 (see Appendix) is periodic. That question is equivalent to asking if the eigenvalues of certain transfer matrices are complex roots of unity, and this paper is not about them.

%========================================================================
\section{Prerequisites on homotopy of independence complexes}
\label{sect:prereq}

We consider finite, undirected graphs. If $v$ is a vertex of $G$ then by $N(v)$ we denote the \emph{neighbourhood} of $G$, i.e. the set of adjacent vertices, and by $N[v]$ the \emph{closed neighbourhood}, defined as $N[v]=N(v)\cup\{v\}$. If $e=\{u,v\}$ is any pair of vertices (not necessarily an edge), then we set $N[e]=N[u]\cup N[v]$. 

If $G\sqcup H$ is the disjoint union of two graphs then its independence complex satisfies
$$\ind(G\sqcup H)=\ind(G)\ast\ind(H)$$
where $\ast$ is the join. In particular, if $\ind(G)$ is contractible then so is $\ind(G\sqcup H)$ for any $H$. We refer to \cite{Book} for facts about (combinatorial) algebraic topology. By $\susp K=S^0\ast K$ we denote the suspension of a simplicial complex $K$.

The independence complex is functorial in two ways. First, for any vertex $v$ of $G$ there is an inclusion $\ind(G\setminus v)\incl \ind(G)$. Secondly, if $e$ is an edge of $G$ then the inclusion $G- e\incl G$ induces an inclusion $\ind(G)\incl\ind(G- e)$. This leads to two main methods of decomposing simplicial complexes, using vertex or edge removals. They can be analyzed using the two cofibration sequences:
\begin{eqnarray*}
\ind(G\setminus N[v])\incl\ind(G\setminus v)\incl\ind(G)\to\susp\,\ind(G\setminus N[v])\to\cdots,\\
\susp\,\ind(G\setminus N[e])\incl\ind(G)\incl\ind(G- e)\to\susp^2\,\ind(G\setminus N[e])\to\cdots,
\end{eqnarray*}
both of which are known in various forms. See \cite{Ada} for proofs.

The following are all immediate consequences of the cofibration sequences.
\begin{lemma}
\label{lemma:simple}
We have the following implications
\begin{itemize}
\item[a)] If $\ind(G\setminus N[v])$ is contractible then $\ind(G)\htpyequiv \ind(G\setminus v)$.
\item[b)] If $\ind(G\setminus v)$ is contractible then $\ind(G)\htpyequiv \susp\,\ind(G\setminus N[v])$.
\item[c)] If $\ind(G\setminus N[e])$ is contractible then $\ind(G)\htpyequiv \ind(G- e)$.
\end{itemize}
\end{lemma}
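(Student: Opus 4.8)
\textbf{Proof proposal for Lemma~\ref{lemma:simple}.}

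The plan is to derive all three statements directly from the two cofibration sequences recalled just above, using in each case the standard fact that a cofibration $A\incl B$ with $A$ contractible induces a homotopy equivalence $B\htpyequiv B/A$, together with the observation that the (homotopy) cofiber of a map can be read off from the next term of the sequence.

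For part a), I would start from the sequence
\[
\ind(G\setminus N[v])\incl\ind(G\setminus v)\incl\ind(G)\to\susp\,\ind(G\setminus N[v])\to\cdots.
\]
Assuming $\ind(G\setminus N[v])$ is contractible, the first map is a cofibration with contractible source, so $\ind(G\setminus v)\htpyequiv \ind(G\setminus v)/\ind(G\setminus N[v])$; but that quotient is precisely the cofiber of the first map, which maps by a homotopy equivalence to $\ind(G)$ by exactness of the sequence (the connecting map to $\susp\,\ind(G\setminus N[v])\htpyequiv\ast$ is null). Hence $\ind(G)\htpyequiv\ind(G\setminus v)$. For part b), I would instead use that the second map $\ind(G\setminus v)\incl\ind(G)$ of the same sequence has contractible source, so its cofiber $\ind(G)/\ind(G\setminus v)$ is homotopy equivalent to $\ind(G)$; and by exactness that cofiber is the next term $\susp\,\ind(G\setminus N[v])$. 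For part c), the argument is identical but applied to the edge cofibration sequence
\[
\susp\,\ind(G\setminus N[e])\incl\ind(G)\incl\ind(G- e)\to\susp^2\,\ind(G\setminus N[e])\to\cdots:
\]
if $\ind(G\setminus N[e])$ is contractible then so is its suspension, the first map is a cofibration with contractible source, and reading off the cofiber gives $\ind(G- e)\htpyequiv\ind(G)$.

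The only genuine subtlety — and the step I would be most careful about — is the passage from ``$A\incl B$ is a cofibration and $A$ is contractible'' to ``$B\htpyequiv B/A$'' and, more importantly, the identification of the relevant quotient/cofiber with the explicitly named next space in the sequence. This is where one must invoke that these really are cofiber sequences of CW (or simplicial) pairs, so that the quotient is a genuine homotopy cofiber and the maps are cofibrations up to homotopy; all of this is exactly what is established in the references \cite{Ada} and \cite{Book}, so in the write-up I would simply cite those and not reprove the cofibration property. Everything else is formal. Note also that in cases b) and c) one should remark that all spaces involved are non-empty (so the suspensions are honest unreduced suspensions and the identifications make sense); this is automatic since $\ind(\cdot)$ of any graph contains the empty set as a vertex unless the graph is empty, and the degenerate cases are trivially checked.
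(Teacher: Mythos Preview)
Your proposal is correct and takes essentially the same approach as the paper, which simply states that all three parts are ``immediate consequences of the cofibration sequences'' and gives no further argument. You have merely spelled out the standard homotopy-theoretic step (contractible source of a cofibration implies the target is equivalent to the cofiber) that the paper leaves implicit.
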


\begin{lemma}
\label{lemma:zadditive}
For any vertex $v$ and edge $e$ of $G$ we have
\begin{eqnarray*}
Z(G)&=&Z(G\setminus v)-Z(G\setminus N[v]),\\
Z(G)&=&Z(G- e)-Z(G\setminus N[e]).
\end{eqnarray*}
\end{lemma}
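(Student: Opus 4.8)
The plan is to obtain both identities by taking unreduced Euler characteristics in the two cofibration sequences recalled just above, and then translating through the substitution $\chi(X)=1-Z(X)$. The only homotopy-theoretic input needed is the following elementary consequence of a cofibration sequence $A\to B\to C$: the associated long exact sequence in reduced homology forces $\tilde\chi(C)=\tilde\chi(B)-\tilde\chi(A)$, hence, for (nonempty) finite complexes, $\chi(C)=\chi(B)-\chi(A)+1$. Applying this with $B$ a point and $A=X$ also gives the formula $\chi(\susp X)=2-\chi(X)$, which I will use for the edge case. Note $\chi$ here is additive in exactly the way needed, since $C$ is the mapping cone of $A\to B$, i.e.\ $B$ glued to the contractible complex $\mathrm{Cone}(A)$ along $A$.

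First I would do the vertex case. In the sequence $\ind(G\setminus N[v])\incl\ind(G\setminus v)\incl\ind(G)\to\susp\,\ind(G\setminus N[v])\to\cdots$ the complex $\ind(G)$ is the cofiber of $\ind(G\setminus N[v])\incl\ind(G\setminus v)$, so $\chi(\ind G)=\chi(\ind(G\setminus v))-\chi(\ind(G\setminus N[v]))+1$. Writing each $\chi$ as $1-Z$ and simplifying collapses this to $Z(G)=Z(G\setminus v)-Z(G\setminus N[v])$. For the edge case, in $\susp\,\ind(G\setminus N[e])\incl\ind(G)\incl\ind(G- e)\to\susp^2\,\ind(G\setminus N[e])\to\cdots$ the complex $\ind(G- e)$ is the cofiber of $\susp\,\ind(G\setminus N[e])\incl\ind(G)$, so $\chi(\ind(G- e))=\chi(\ind G)-\chi(\susp\,\ind(G\setminus N[e]))+1=\chi(\ind G)+\chi(\ind(G\setminus N[e]))-1$ using $\chi(\susp X)=2-\chi(X)$. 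Substituting $\chi=1-Z$ and rearranging gives $Z(G)=Z(G- e)-Z(G\setminus N[e])$.

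An equivalent, purely combinatorial route avoids homotopy theory altogether: $\ind(G\setminus v)$ and the cone $v\ast\ind(G\setminus N[v])$ cover $\ind(G)$ with intersection $\ind(G\setminus N[v])$, while for $e=\{u,v\}$ the complex $\ind(G)$ together with the contractible complex obtained by joining the $1$-simplex on $\{u,v\}$ with $\ind(G\setminus N[e])$ covers $\ind(G- e)$, with intersection $S^0\ast\ind(G\setminus N[e])=\susp\,\ind(G\setminus N[e])$; applying $\chi(A\cup B)=\chi(A)+\chi(B)-\chi(A\cap B)$ and $\chi(\mathrm{cone})=1$, $\chi(\susp X)=2-\chi(X)$ yields the same two equations. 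I do not expect any real obstacle here — the content is entirely contained in the cofibration sequences, which we may assume. The only point requiring a little care is the normalisation in degenerate cases, e.g.\ when $N[v]$ or $N[e]$ exhausts the vertex set of $G$, so that the relevant independence complex is $\{\emptyset\}$ with $\chi=0$ and $Z=1$; one simply checks that the conventions ($Z(S^{-1})=1$, $Z(\susp X)=-Z(X)$) remain consistent, so the formulas hold without exception.
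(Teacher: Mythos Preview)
Your proposal is correct and follows exactly the approach the paper indicates: the paper itself gives no detailed proof of this lemma but simply declares it (together with Lemma~\ref{lemma:simple}) an ``immediate consequence of the cofibration sequences,'' and your argument spells out precisely that consequence via the additivity of reduced Euler characteristic along a cofiber sequence. Your alternative inclusion--exclusion derivation is also valid and is a pleasant, self-contained variant that bypasses the cofibration machinery entirely.
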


There are some special combinatorial circumstances where Lemma \ref{lemma:simple} applies. 
\begin{lemma}
\label{lemma:simple2}
We have the following implications
\begin{itemize}
\item[a)] (Fold lemma, \cite{Eng1}) If $N(u)\subset N(v)$ then $\ind(G)\htpyequiv \ind(G\setminus v)$.
\item[b)] If $u$ is a vertex of degree $1$ and $v$ is its only neighbour then $\ind(G)\htpyequiv \susp\,\ind(G\setminus N[v])$.
\item[c)] If $u$ and $v$ are two adjacent vertices of degree $2$ which belong to a $4$-cycle in $G$ together with two other vertices $x$ and $y$ then $\ind(G)\htpyequiv \susp\,\ind(G\setminus\{u,v,x,y\})$ (see Fig.\ref{fig:1}).
\item[d)] If $G$ is a graph that contains any of the configurations shown in Fig.\ref{fig:contr}, then $\ind(G)$ is contractible.
\end{itemize}
\end{lemma}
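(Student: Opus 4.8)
The plan is to deduce all four parts from Lemma~\ref{lemma:simple} and the two cofibration sequences, using repeatedly the trivial observation that $\ind(H)$ is contractible whenever $H$ has an isolated vertex $w$: then $\ind(H)=\ind(H\setminus w)\ast\ind(\{w\})$ is a cone with apex $w$. Note that parts a) and b) do not use c) or d), so they may freely be invoked inside the proofs of the latter.

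For a), the inclusion $N(u)\subset N(v)$ forces $u\not\sim v$, because $u\sim v$ would give $v\in N(u)\subset N(v)$, a loop; hence $u$ still lies in $G\setminus N[v]$, where its neighbourhood is $N(u)\setminus N[v]=\emptyset$ since $N(u)\subset N(v)\subset N[v]$. Thus $\ind(G\setminus N[v])$ is contractible and Lemma~\ref{lemma:simple}a) gives $\ind(G)\htpyequiv\ind(G\setminus v)$. For b), in $G\setminus v$ the vertex $u$ becomes isolated, so $\ind(G\setminus v)$ is contractible and Lemma~\ref{lemma:simple}b) applies verbatim.

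For c), since $u\sim v$ and $\deg(u)=\deg(v)=2$ the $4$-cycle on $\{u,v,x,y\}$ cannot carry a chord, so $\{u,v\}$ is one of its edges and, after possibly renaming $x$ and $y$, the cycle is $u-v-x-y-u$ with $N(u)=\{v,y\}$ and $N(v)=\{u,x\}$ exactly. As $x$ is adjacent to both $v$ and $y$ we get $N(u)\subset N(x)$, so part a) gives $\ind(G)\htpyequiv\ind(G\setminus x)$. In $G\setminus x$ the vertex $v$ has $u$ as its only neighbour, so part b) gives $\ind(G\setminus x)\htpyequiv\susp\,\ind\big((G\setminus x)\setminus N_{G\setminus x}[u]\big)$, and since $x\notin N_G[u]=\{u,v,y\}$ this last graph is exactly $G\setminus\{u,v,x,y\}$, which is the assertion. (One can argue equally well through the edge cofibration sequence for $e=\{u,v\}$: then $N[e]=\{u,v,x,y\}$, and in $G-e$ deleting the closed neighbourhood of a suitable vertex isolates one of $u,v$, so $\ind(G-e)$ is contractible and the sequence identifies $\ind(G)$ with $\susp\,\ind(G\setminus N[e])$.)

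For d), I would treat Figure~\ref{fig:contr} configuration by configuration. In each case the degree constraints shown let one pick a vertex or an edge whose removal---justified by a), b), the fold lemma, or Lemma~\ref{lemma:simple}c)---either creates an isolated vertex outright or makes two of the three consecutive terms of one of the cofibration sequences contractible, whence $\ind(G)$ itself is contractible. I expect this case analysis to be the only real work of the lemma: every individual reduction is of the elementary kind above, but one has to check for each picture that the indicated degrees genuinely force the chosen deletion to collapse the graph, and choosing the right vertex or edge to delete is the one spot where a little thought is required.
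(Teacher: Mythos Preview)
Your argument matches the paper's almost exactly: parts a) and b) are verbatim the same use of Lemma~\ref{lemma:simple}, and your main route through c)---remove $x$ via the fold, then apply b) to the resulting degree-$1$ vertex---is precisely what the paper does. For d) the paper is a touch more specific than your sketch: it asserts that a \emph{single} application of b) or c) to each of the four pictured configurations already produces an isolated vertex, so no iterated reductions or edge moves are needed; you might tighten your case analysis accordingly once you have the figures in front of you.
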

\begin{proof}
In a) vertex $v$ satisfies Lemma \ref{lemma:simple}.a) and in b) it satisfies Lemma \ref{lemma:simple}.b). In c) one can first remove $x$ without affecting the homotopy type (because $G\setminus N[x]$ has $u$ as an isolated vertex), and then apply Lemma \ref{lemma:simple} to $v$. Finally d) follows because a single operation of type described in b) or c) leaves a graph with an isolated vertex.
\end{proof}

A vertex $v$ of $G$ is called \emph{removable} if the inclusion $\ind(G\setminus v)\incl \ind(G)$ is a homotopy equivalence. We call the graph $G\setminus N[v]$ the \emph{residue graph} of $v$ in $G$. By Lemma \ref{lemma:simple}.b if the residue graph of $v$ has a contractible independence complex then $v$ is removable.

If $e$ is an edge of $G$ then we say $e$ is \emph{removable} if the inclusion $\ind(G)\incl \ind(G- e)$ is a homotopy equivalence. If $e$ is not an edge of $G$ then $e$ is \emph{insertable} if the inclusion $\ind(G\cup e)\incl\ind(G)$ is a homotopy equivalence or, equivalently, if $e$ is removable from $G\cup e$. In both cases we call the graph $G\setminus N[e]$ the \emph{residue graph} of $e$ in $G$. By Lemma \ref{lemma:simple}.c if the residue graph of $e$ has a contractible independence complex then $e$ is removable or insertable, accordingly.

\begin{figure}
\begin{center}
\includegraphics[scale=0.85]{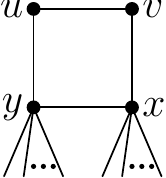}
\end{center}
\caption{A configuration which can be removed at the cost of a suspension (Lemma \ref{lemma:simple2}.c).}
\label{fig:1}
\end{figure}

\begin{figure}
\begin{center}
\begin{tabular}{ccccccc}
\includegraphics[scale=0.85]{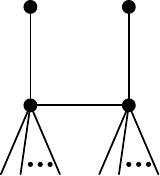} & &\includegraphics[scale=0.85]{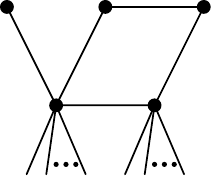} & & \includegraphics[scale=0.85]{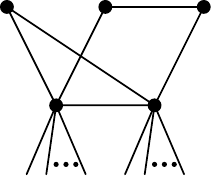} & & \includegraphics[scale=0.85]{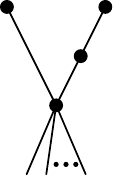} \\
A & & B & & C & & D
\end{tabular}
\end{center}
\caption{Four types of configurations which force contractibility of the independence complex (Lemma \ref{lemma:simple2}.d).}
\label{fig:contr}
\end{figure}

%========================================================================
\section{Proofs of Theorem \ref{thm:periods} and Proposition \ref{prop:other}}
\label{sect:simplesuspension}

We identify the vertices of $P_m$ with $\{1,\ldots,m\}$ and the vertices of $C_n$ with $\zet/n=\{0,\ldots,n-1\}$. Product graphs have vertices indexed by pairs. To deal with the degenerate cases it is convenient to assume that $C_2=P_2$, that $C_1$ is a single vertex with a loop and that $C_0=P_0$ are empty graphs. This convention forces all spaces $\ind(C_1)$, $\ind(P_0)$ and $\ind(C_0)$ to be the empty space $\emptyset=S^{-1}$.

\begin{proof}[Proof of \ref{thm:periods}.a).]
This is a known result of \cite{Koz}, but we present a proof that introduces our method. Identify $P_1\times C_n$ with $C_n$. Let $e=\{0,4\}$. The residue graph of $e$ has $2$ as an isolated vertex, so $e$ is insertable. In $C_n\cup e$ the edge $\{0,1\}$ is removable (because $3$ is isolated in the residue) and subsequently $\{3,4\}$ is removable ($1$ isolated in the residue). It follows that
$$\ind(C_n)\htpyequiv\ind(C_n\cup\{0,4\}\setminus\{0,1\}\setminus\{3,4\}).$$
But the latter graph is $C_{n-3}\sqcup P_3$, therefore its independence complex is $\ind(C_{n-3})\ast\ind(P_3)\htpyequiv \ind(C_{n-3})\ast S^0=\susp\,\ind(C_{n-3})$ as required.
\end{proof}

\begin{proof}[Proof of \ref{thm:periods}.b).]
In $P_2\times C_n$ the edge $e_1=\{(1,0),(1,5)\}$ is insertable, because its residue graph contains a configuration of type A (see Lemma \ref{lemma:simple2}.d), namely with vertices $(2,1)$ and $(2,4)$ having degree one. For the same reason the edge $e_2=\{(2,0),(2,5)\}$ is insertable. Now in the graph $(P_2\times C_n)\cup\{e_1,e_2\}$ the edges 
$$f_1=\{(1,0),(1,1)\}, f_2=\{(2,0),(2,1)\}, f_3=\{(1,4),(1,5)\}, f_4=\{(2,4),(2,5)\}$$
are all sequentially removable, because the residue graph in each case contains a configuration of type B. Therefore
\begin{eqnarray*}
\ind(P_2\times C_n)&\htpyequiv & \ind((P_2\times C_n)\cup\{e_1,e_2\}-\{f_1,f_2,f_3,f_4\})=\\
&=&\ind(P_2\times C_{n-4}\sqcup P_2\times P_4)=\ind(P_2\times C_{n-4})\ast\ind(P_2\times P_4)\htpyequiv\\
&\htpyequiv &\ind(P_2\times C_{n-4})\ast S^1= \susp^2\,\ind(P_2\times C_{n-4})
\end{eqnarray*}
where $\ind(P_2\times P_4)$ can be found by direct calculation or from Proposition \ref{prop:other}.
\end{proof}

\begin{remark}
All the proofs in this section will follow the same pattern, that is to split the graph into two parts. One of those parts will be small, i.e. of some fixed size, and its independence complex will always have the homotopy type of a single sphere. Every time we need to use a result of this kind about a graph of small, fixed size, we will just quote the answer, leaving the verification to those readers who do not trust computer homology calculations \cite{Poly}.
\end{remark}

\begin{proof}[Proof of \ref{thm:periods}.c).]
We follow the strategy of b). First we need to show that the edges $e_1=\{(1,0),(1,9)\}$, $e_2=\{(2,0),(2,9)\}$, $e_3=\{(3,0),(3,9)\}$ are insertable. 

For $e_1$ the residue graph is shown in Fig.\ref{fig:p3cn}.a. To prove its independence complex is contractible we describe a sequence of operations that either preserve the homotopy type or throw in an extra suspension. The sequence will end with a graph whose independence complex is contractible for some obvious reason, so also the complex we started with is contractible. The operations are as follows: remove $(3,2)$ (by \ref{lemma:simple2}.a with $u=(2,1)$); remove $(2,3)$ (by \ref{lemma:simple2}.a with $u=(1,2)$); remove $N[(3,4)]$ (by \ref{lemma:simple2}.b with $u=(3,3)$); remove $(2,6)$ (by \ref{lemma:simple2}.a with $u=(1,7)$); remove $N[(1,5)]$ (by \ref{lemma:simple2}.b with $u=(2,5)$). In the last graph there is a configuration of type A on the vertices $(3,6)$ and $(1,7)$.

The same argument works for $e_3$. For $e_2$ the residue graph has a connected component shown in Fig.\ref{fig:p3cn}.b. The modifications this time are: remove $N[(1,2)]$, $N[(1,7)]$, $N[(3,2)]$  and $N[(3,7)]$ for reasons of \ref{lemma:simple2}.b. The graph that remains contains a configuration of type A.

Next it remains to check that in $(P_3\times C_n)\cup\{e_1,e_2,e_3\}$ the edges $\{(0,i),(1,i)\}$ and $\{(8,i),(9,i)\}$ are removable for $i=1,2,3$. The types of residue graphs one must consider are quite similar and the arguments for the contractibility of their independence complexes are exact copies of those for $e_1,e_2,e_3$ above. We leave them as an exercise to the reader.

We can thus conclude as before
\begin{equation*}
\ind(P_3\times C_n)\htpyequiv\ind(P_3\times C_{n-8})\ast\ind(P_3\times P_8)\htpyequiv\ind(P_3\times C_{n-8})\ast S^5= \susp^6\,\ind(P_3\times C_{n-8}).
\end{equation*}

\end{proof}

\begin{figure}
\begin{center}
\begin{tabular}{ccc}
\includegraphics[scale=0.85]{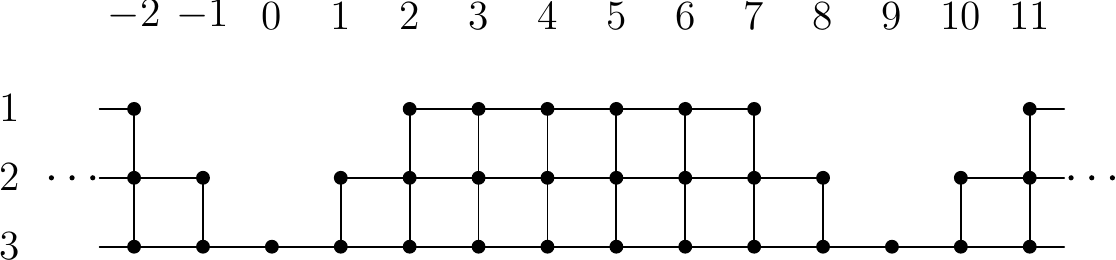} & &\includegraphics[scale=0.85]{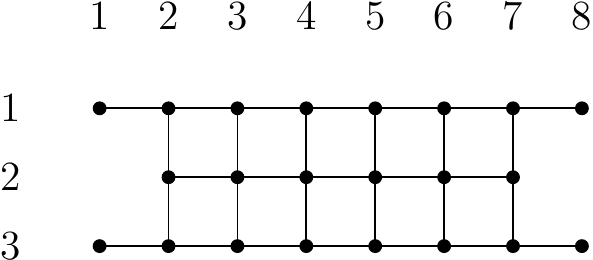} \\
a) & &  b)
\end{tabular}
\end{center}
\caption{Two residue graphs for edges insertable into $P_3\times C_n$.}
\label{fig:p3cn}
\end{figure}

\begin{figure}
\begin{center}
\includegraphics[scale=0.85]{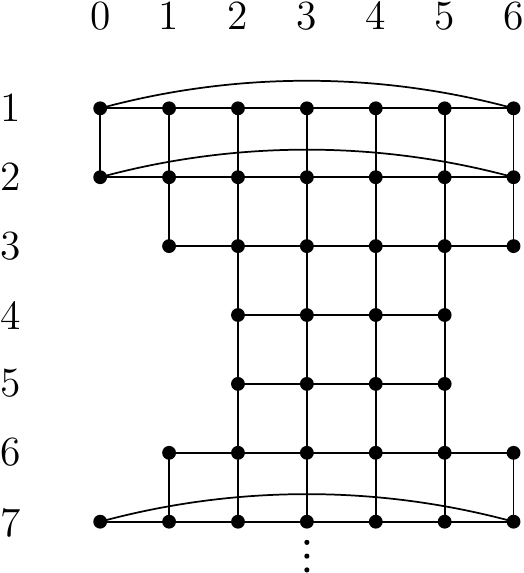}
\end{center}
\caption{The residue graph for edges removable from $P_m\times C_7$.}
\label{fig:pmc7}
\end{figure}

\begin{proof}[Proof of \ref{thm:periods}.d).]
In $P_m\times C_3$ each edge $\{(3,i),(4,i)\}$ is removable for $i=0,1,2$ because each residue graph contains a configuration of type C. As before, this implies an equivalence
\begin{equation*}
\ind(P_m\times C_3)\htpyequiv \ind(P_{m-3}\times C_{3})\ast\ind(P_3\times C_3)\htpyequiv\ind(P_{m-3}\times C_{3})\ast S^1= \susp^2\,\ind(P_{m-3}\times C_3)
\end{equation*}
\end{proof}

\begin{proof}[Proof of \ref{thm:periods}.e).]
In $P_m\times C_5$ each edge $\{(2,i),(3,i)\}$ is removable for $i=0,1,2,3,4$ because each residue graph contains a configuration of type A with vertices $(1,i-1)$ and $(1,i+1)$ having degree $1$. Again, this means
\begin{equation*}
\ind(P_m\times C_5)\htpyequiv \ind(P_{m-2}\times C_{5})\ast\ind(P_2\times C_5)\htpyequiv\ind(P_{m-2}\times C_{5})\ast S^1= \susp^2\,\ind(P_{m-2}\times C_5)
\end{equation*}
\end{proof}

\begin{proof}[Proof of \ref{thm:periods}.f).]
We want to show that the edges $e_i=\{(4,i),(5,i)\}$, $i=0,\ldots,6$ are sequentially removable. Then the result will follow as before:
\begin{equation*}
\ind(P_m\times C_7)\htpyequiv \ind(P_{m-4}\times C_{7})\ast\ind(P_4\times C_7)\htpyequiv\ind(P_{m-4}\times C_{7})\ast S^5= \susp^6\,\ind(P_{m-4}\times C_7).
\end{equation*}
The residue graph of $e_0$ is the graph $G$ from Fig.\ref{fig:pmc7}. We need to show that the independence complex of that graph is contractible. To do this, we will first show that each of the vertices $(4,j)$, $j=2,3,4,5$, is removable in $G$. By symmetry, it suffices to consider $(4,2)$ and $(4,3)$.

Consider first the vertex $(4,2)$ and its residue graph $G\setminus N[(4,2)]$. It can be transformed in the following steps: remove $N[(2,1)]$ (by \ref{lemma:simple2}.b with $u=(3,1)$); remove $N[(1,6)]$ (by \ref{lemma:simple2}.b with $u=(1,0)$); remove $N[(1,3)]$ (by \ref{lemma:simple2}.b with $u=(1,2)$); remove $N[(3,4)]$ (by \ref{lemma:simple2}.b with $u=(3,3)$). In the final graph $(2,5)$ is isolated.

Now we prove the residue graph $G\setminus N[(4,3)]$ has a contractible independence complex. Decompose it as follows: remove $(3,1),(3,2),(2,1),(2,2)$ (by \ref{lemma:simple2}.c); remove $(1,6)$ (by \ref{lemma:simple2}.a with $u=(2,0)$); remove $(2,5)$ (by \ref{lemma:simple2}.a with $u=(3,6)$); remove $N[(1,4)]$ (by \ref{lemma:simple2}.b with $u=(1,5)$). In the final graph $(2,3)$ is isolated.

Since all the vertices in row $4$ of $G$ are removable, $\ind(G)$ is homotopy equivalent to the join $\ind(G[1,2,3])\ast \ind(G[5,\ldots])$, where $G[\ldots]$ means the subgraph of $G$ spanned by the numbered rows. But a direct calculation shows that $\ind(G[1,2,3])$ is contractible, hence so is $\ind(G)$. This ends the proof that the edge $e_0=\{(4,0),(5,0)\}$ of $P_m\times C_7$ was removable.

For all other edges $e_i$ in a sequence the residue graph will look exactly like $G$ with possibly some edges between rows $4$ and $5$ missing. This has no impact on contractibility since all of the above proof took part in rows $1,2,3$ of Fig.\ref{fig:pmc7}. That means that all $e_i$ are removable, as required.
\end{proof}

\begin{proof}[Proof of Proposition \ref{prop:other}.]
We just sketch the arguments and the reader can check the details. Part a) is a result of \cite{Koz} and also follows from observing that the edge $\{3,4\}$ of the path $P_n$ is removable as its residue graph has an isolated vertex $1$. Part b) follows directly from Lemma \ref{lemma:simple2}.c. 

For part c), each edge $\{(i,4),(i,5)\}$, $i=1,2,3$ of $P_3\times P_n$ is removable because their residue graphs either contain, or can easily be reduced to contain, a configuration of type C or D. Then the graph splits into two components and we conclude as usually. 

In d) we first show that each edge $\{(i,0),(i,4)\}$, $i=0,1,2$ is insertable into $C_3\times C_n$ because the residue graph contains a configuration of type C. Then in the enlarged graph the obvious edges which must be removed to obtain a disjoint union $C_3\times C_{n-3}\sqcup C_3\times P_3$ are indeed removable, again because of a type C configuration in their residue graphs. We conclude as always.
\end{proof}

%========================================================================
%========================================================================
\section{Cylinders with even circumference: Patterns}
\label{sect:patterns}

To prove the results about cylinders of even circumference we will need quite a lot of notation. On the plus side, once all the objects are properly defined, the proofs will follow in a fairly straightforward way. It is perhaps instructive to read this and the following sections simultaneously. The next section contains a working example of what is going on for $n=6$. From now on $n$, the length of the cycle, is a fixed even integer which will not appear in the notation.

\empha{A pattern} $\nicep$ is a matrix of size $2\times n$ with $0/1$ entries and such that if $\nicep(1,i)=1$ then $\nicep(2,i)=1$, i.e. below a $1$ in the first row there is always another $1$ in the second row. An example of a pattern is
$$
\nicep=\left(\begin{array}{cccccc}1&0&1&0&0&0\\ 1&1&1&1&0&1\end{array}\right).
$$
We also call $n$ the length of the pattern. The rows of a pattern are indexed by $1$ and $2$, while the columns are indexed with $0,\ldots,n-1$, as the vertices of $C_n$. Given $i$ we say $\nicep(1,i)$ is `above' $\nicep(2,i)$ and $\nicep(2,i)$ is `below' $\nicep(1,i)$. We identify a pattern with patterns obtained by a cyclic shift or by a reflection, since they will define isomorphic graphs (see below). Also the words `left', `right' and `adjacent' are understood in the cyclic sense.

Given a pattern $\nicep$ define $G(\nicep;m)$ as the induced subgraph of $P_m\times C_n$ obtained by removing those vertices $(1,i)$ and $(2,i)$ for which $\nicep(1,i)=0$, resp. $\nicep(2,i)=0$. This amounts to applying a `bit mask' defined by $\nicep$ to the first two rows of $P_m\times C_n$. The graph $G(\nicep;m)$ for the pattern $\nicep$ above is:
\begin{center}
\includegraphics[scale=0.85]{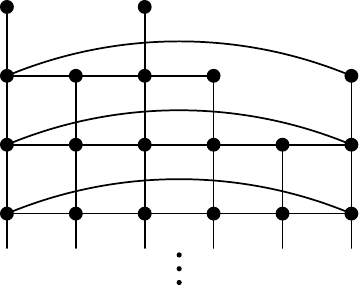}
\end{center}

Define the simplified notation $Z(\nicep;m):=Z(G(\nicep;m))$. If $\nicei$ denotes the all-ones pattern then $Z(\nicei;m)=Z(P_m\times C_n)$ is the value we are interested in.

We now need names for some structures within a row:
\begin{itemize}
\item A \empha{singleton} is a single $1$ with a $0$ both on the left and on the right. 
\item A \empha{block} is a contiguous sequence of $1$s of length at least $3$, which is bounded by a $0$ both on the left and on the right.
\item A \empha{run} is a sequence of blocks and singletons separated by single $0$s.
\item A \empha{nice run} is a run in which every block has length exactly $3$.
\end{itemize}

\begin{example}
The sequence $01010111010111010$ is a nice run. The sequence $01110111101010$ is a run, but it is not a nice run. The sequences $01011010$ and $0101001110$ are not runs.
\end{example}

A pattern is \empha{reducible} if the only occurrences of $1$ in the first row are singletons. Otherwise we call it \empha{irreducible}. In particular, a pattern whose first row contains only $0$s is reducible.

We now need three types of pattern transformations, which we denote $V$ (for vertex), $N$ (for neighbourhood) and $R$ (for reduction). The first two can be performed on any pattern. Suppose $\nicep$ is a pattern and $i$ is an index such that $\nicep(1,i)=1$.
\begin{itemize}
\item An \empha{operation of type $V$} sets $\nicep(1,i)=0$. We denote the resulting pattern $\nicep^{V,i}$.
\item An \empha{operation of type $N$} sets $\nicep(1,i-1)=\nicep(1,i)=\nicep(1,i+1)=\nicep(2,i)=0$. We denote the resulting pattern $\nicep^{N,i}$.
\end{itemize}
All other entries of the pattern remain unchanged. If it is clear what index $i$ is used we will abbreviate the notation to $\nicep^V$ and $\nicep^N$.
\begin{lemma}
\label{lem:zvn}
For any pattern $\nicep$ and index $i$ such that $\nicep(1,i)=1$ we have
$$Z(\nicep;m)=Z(\nicep^{V,i};m)-Z(\nicep^{N,i};m),\quad m\geq 2.$$
\end{lemma}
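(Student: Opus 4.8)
The plan is to apply the vertex form of Lemma~\ref{lemma:zadditive} to the graph $G=G(\nicep;m)$ at the vertex $v=(1,i)$, and then to recognise $G\setminus v$ and $G\setminus N[v]$ as the graphs attached to the patterns $\nicep^{V,i}$ and $\nicep^{N,i}$. Since $\nicep(1,i)=1$, the vertex $(1,i)$ does lie in $G(\nicep;m)$, so the lemma is applicable and yields $Z(G)=Z(G\setminus v)-Z(G\setminus N[v])$.

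The first identification, $G\setminus v=G(\nicep^{V,i};m)$, is immediate: deleting the single vertex $(1,i)$ from $G(\nicep;m)$ is exactly the effect of applying the bit mask $\nicep^{V,i}$, which differs from $\nicep$ only at the entry $(1,i)$.

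The second identification, $G\setminus N[v]=G(\nicep^{N,i};m)$, is where I would be a little careful, and where the hypothesis $m\geq 2$ is used. Because $G(\nicep;m)$ is an \emph{induced} subgraph of $P_m\times C_n$, the closed neighbourhood $N_G[v]$ is $\{v\}$ together with those neighbours of $(1,i)$ in $P_m\times C_n$ that survived the bit mask. For $m\geq 2$ the vertex $(1,i)$ has exactly three neighbours in $P_m\times C_n$, namely $(1,i-1)$, $(1,i+1)$ and $(2,i)$; the first two lie in $G$ precisely when the entries $\nicep(1,i-1)$, respectively $\nicep(1,i+1)$, equal $1$, while $(2,i)$ always lies in $G$ because the defining constraint of a pattern forces $\nicep(2,i)=1$ whenever $\nicep(1,i)=1$. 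Hence deleting $N_G[v]$ from $G(\nicep;m)$ removes exactly the vertices whose pattern entries are zeroed in passing from $\nicep$ to $\nicep^{N,i}$ (zeroing an entry that is already $0$ has no effect on the graph), and since $G\setminus N[v]$ and $G(\nicep^{N,i};m)$ are both induced subgraphs of $P_m\times C_n$ on the same vertex set, they coincide. It is worth noting in passing that $\nicep^{V,i}$ and $\nicep^{N,i}$ are themselves legitimate patterns, which is clear from the definitions.

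Putting the three observations together gives $Z(\nicep;m)=Z(G\setminus v)-Z(G\setminus N[v])=Z(\nicep^{V,i};m)-Z(\nicep^{N,i};m)$, as required. I do not anticipate any real difficulty here: the only point needing attention is the bookkeeping of which neighbours of $(1,i)$ survive in $G$, and this is handled uniformly by the pattern constraint together with $m\geq 2$ (which guarantees that row~$2$ exists, so that $(2,i)$ is the unique ``downward'' neighbour of $(1,i)$ and really is present). For $m=1$ the operation $N$ would spuriously also zero the irrelevant entry $\nicep(2,i)$, which is precisely why the statement is restricted to $m\geq 2$.
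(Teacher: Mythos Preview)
Your proof is correct and follows exactly the paper's approach: the paper's proof is a single sentence stating that this is the first equality of Lemma~\ref{lemma:zadditive} applied to $G=G(\nicep;m)$, and you have simply unpacked the two identifications $G\setminus v=G(\nicep^{V,i};m)$ and $G\setminus N[v]=G(\nicep^{N,i};m)$ that the paper leaves implicit. Your only minor slip is the closing remark about $m=1$: since row~$2$ does not exist in $P_1\times C_n$, zeroing $\nicep(2,i)$ is harmless there too, so the identity actually still holds; the restriction $m\geq 2$ is for convenience rather than necessity, but this does not affect the validity of your argument.
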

\begin{proof}
This is exactly the first equality of Lemma \ref{lemma:zadditive} for $G=G(\nicep;m)$.
\end{proof}
The third \empha{operation, of type $R$}, can be applied to a reducible pattern $\nicep$ and works as follows. First, temporarily extend $\nicep$ with a new, third row, filled with ones. Now for every index $i$ such that $\nicep(1,i)=1$ (note that no two such $i$ are adjacent) make an assignment
$$\nicep(1,i)=\nicep(2,i-1)=\nicep(2,i)=\nicep(2,i+1)=\nicep(3,i)=0.$$
Having done this for all such $i$ remove the first row (which is now all zeroes) and let $\nicep^R$ be the pattern formed by the second and third row.
\begin{lemma}
\label{lem:zr}
Suppose $\nicep$ is a reducible pattern and $k$ is the number of ones in its first row. Then
$$Z(\nicep;m)=(-1)^kZ(\nicep^R;m-1),\quad m\geq 2.$$
\end{lemma}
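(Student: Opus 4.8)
The plan is to realize the $R$-operation as a composition of $k$ applications of Lemma~\ref{lem:zvn} (or rather its geometric meaning), one for each singleton in the first row, each contributing a factor of $-1$, and then to identify the graph that remains with $G(\nicep^R; m-1)$. Concretely, fix one index $i$ with $\nicep(1,i)=1$. Since $\nicep$ is reducible, $\nicep(1,i)$ is a singleton: $\nicep(1,i-1)=\nicep(1,i+1)=0$, and of course $\nicep(2,i)=1$. In the graph $G(\nicep;m)$ the vertex $(1,i)$ then has degree exactly $1$ — its only neighbour is $(2,i)$, because $(1,i-1)$ and $(1,i+1)$ have been deleted and $(2,i)$ is the unique other neighbour in $P_m\times C_n$. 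By Lemma~\ref{lemma:simple2}.b (the degree-one vertex lemma) applied to $u=(1,i)$ with unique neighbour $v=(2,i)$, we get $\ind(G(\nicep;m))\htpyequiv\susp\,\ind(G(\nicep;m)\setminus N[(2,i)])$, hence $Z(\nicep;m)=-Z(G(\nicep;m)\setminus N[(2,i)])$.

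The key observation is that deleting $N[(2,i)]$ from $G(\nicep;m)$ removes exactly the vertices $(1,i)$, $(2,i)$, $(2,i-1)$, $(2,i+1)$ and $(3,i)$ — precisely the five entries that the $R$-operation zeroes out at index $i$ (reading $P_m\times C_n$ as having a ``third row'' which is initially full). Since the singletons in the first row are pairwise non-adjacent, these deletions for distinct indices $i$ affect disjoint sets of vertices, so we may perform the degree-one reduction at all $k$ singletons in sequence; at each stage the vertex being reduced still has degree $1$ because its first-row neighbours were already absent and its second-row neighbour is untouched by the earlier steps. After all $k$ reductions we have accumulated a sign $(-1)^k$ and are left with $\ind$ of a graph $H$ which is the induced subgraph of $P_m\times C_n$ on the vertex set obtained by: keeping rows $3,\ldots,m$ intact except for the holes $(3,i)$ at the singleton columns; keeping in row $2$ exactly the columns $j$ with $\nicep(2,j)=1$ that are not equal to, nor adjacent to, a singleton column; and deleting all of row $1$.

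It remains to check that $H$ is isomorphic to $G(\nicep^R; m-1)$. By the definition of $\nicep^R$, its second row is the old second row with the singleton columns and their neighbours zeroed, and its third row is the all-ones row with the singleton columns zeroed — and after relabelling rows $2,3,\ldots,m$ of $P_m\times C_n$ as rows $1,2,\ldots,m-1$ of $P_{m-1}\times C_n$, the description of $H$ above becomes exactly ``apply the bit mask $\nicep^R$ to the first two rows of $P_{m-1}\times C_n$,'' i.e. $H=G(\nicep^R;m-1)$. Hence $Z(\nicep;m)=(-1)^k Z(\nicep^R;m-1)$, valid for $m\geq 2$ so that a third row exists and all deletions make sense.

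The main obstacle is purely bookkeeping: one must verify carefully that at each of the $k$ successive applications of Lemma~\ref{lemma:simple2}.b the relevant vertex genuinely has degree $1$ in the current graph (not merely in the original), which relies on the non-adjacency of distinct singletons in the first row, and that the residue sets $N[(2,i)]$ are genuinely disjoint so the order of reduction is irrelevant. There is also a mild degenerate-case check — columns where $\nicep(2,j)=0$ even though $\nicep(1,j)$ would have been a singleton cannot occur by the pattern axiom, and one should note that when $m=2$ the ``third row'' is the freshly added one so the statement is about $Z(\nicep^R;1)$, which is consistent with the conventions fixed in Section~\ref{sect:simplesuspension}.
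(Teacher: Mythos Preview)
Your proof is correct and follows exactly the route of the paper's one-line proof, which simply cites a $k$-fold application of Lemma~\ref{lemma:simple2}.b). One small correction: the closed neighbourhoods $N[(2,i)]$ need not be pairwise disjoint (if two singletons sit at cyclic distance~$2$ they can share the vertex $(2,i+1)$), but this is harmless since what actually matters---and what you do verify---is that for each remaining singleton $j$ the vertex $(2,j)$ survives all earlier deletions, so $(1,j)$ still has degree~$1$.
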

\begin{proof}
It follows from a $k$-fold application of Lemma \ref{lemma:simple2}.b).
\end{proof}

We now describe a class of patterns which arise when one applies the above operations in some specific way to the all-ones pattern. A pattern $\nicep$ is called \empha{proper} if it satisfies the following conditions:
\begin{itemize}
\item The whole second row is either a run or it consists of only $1$s.
\item If the second row has only $1$s then the first row is a nice run.
\item Above every singleton $1$ in the second row there is a $0$ in the first row.
\item Above every block of length $3$ in the second row there are $0$s in the first row.
\item If $B$ is any block in the second row of length at least $4$ then above $B$ there is a nice run $R$, subject to the conditions:
\begin{itemize}
\item if the leftmost group of $1$s in $R$ is a block (of length $3$) then the leftmost $1$ of that block is located exactly above the $3$rd position of $B$,
\item if the leftmost group of $1$s in $R$ is a singleton then it is located exactly above the $2$nd or $3$rd position of $B$.
\end{itemize}
By symmetry the same rules apply to the rightmost end of $B$ and $R$.
\end{itemize}

Note that a proper pattern does not contain the sequences $0110$ nor $1001$ in any row. Also note that the first row of any proper pattern can only contain singletons and blocks of length $3$, and no other groups of $1$s.

\begin{example}
\label{ex:proper}
Here are some proper patterns:
\begin{eqnarray*}
\mathcal{A}=\left(\begin{array}{cccccccccc}0&1&0&1&0&0&0&0&0&0\\1&1&1&1&1&0&1&1&1&0 \end{array}\right) &
\mathcal{B}=\left(\begin{array}{cccccccccc}1&0&1&1&1&0&1&1&1&0\\1&1&1&1&1&1&1&1&1&1 \end{array}\right) \\
\mathcal{C}=\left(\begin{array}{cccccccccc}0&0&1&1&1&0&0&0&0&0\\1&1&1&1&1&1&1&0&1&0 \end{array}\right) &
\mathcal{D}=\left(\begin{array}{cccccccccc}0&0&1&0&1&1&1&0&0&0\\1&1&1&1&1&1&1&1&1&0 \end{array}\right)
\end{eqnarray*}

\end{example}

A pattern is called \empha{initial} if it is obtained from the all-ones pattern $\nicei$ by performing, for each even index $i=0,2,4,\ldots,n-2$ one of the operations of type $V$ or $N$. It means there should be $2^{n/2}$ initial patterns, but some of them can be identified via cyclic shift or reflection. One can easily see that every initial pattern is reducible. Moreover, by a repeated application of Lemma \ref{lem:zvn} we get that $Z(P_m\times C_n)$ is a linear combination of the numbers $Z(\nicep;m)$ for initial patterns $\nicep$. More importantly, we have:

\begin{lemma}
\label{lem:initialproper}
Every initial pattern is proper.
\end{lemma}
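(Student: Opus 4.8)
The plan is to describe an initial pattern completely explicitly and then verify the five clauses in the definition of \emph{proper} one after another; each clause then reduces to a short parity computation.

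The first step is the observation that the operations of type $V$ and $N$ only ever change an entry from $1$ to $0$, and that each even index $i\in\{0,2,\dots,n-2\}$ is acted on exactly once. Hence the order in which the operations are applied is immaterial, and an initial pattern is completely determined by the subset $S\subseteq\{0,2,\dots,n-2\}$ of even indices at which the operation chosen was of type $N$. Reading off the effect of the two operations, the resulting pattern $\nicep$ satisfies
$$\nicep(2,i)=0 \iff i\in S, \qquad \nicep(1,i)=1 \iff i\text{ is odd and }\{i-1,i+1\}\cap S=\emptyset.$$
Two consequences are immediate: if $\nicep(1,i)=1$ then $i$ is odd, hence $\nicep(2,i)=1$, so $\nicep$ really is a pattern; and the $1$s of the first row occupy pairwise non-adjacent (odd) positions, so the first row consists entirely of singletons.

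The second step is the verification proper. If $S=\emptyset$, then the second row is all $1$s and, since $n$ is even, the first row is the alternating word $0101\cdots01$, which is a nice run; this settles the first two clauses and makes the last three vacuous. If $S\neq\emptyset$, then since $S$ consists of even indices no two of its elements are adjacent on $C_n$, so every maximal block of $0$s in the second row has length exactly $1$; moreover between two cyclically consecutive elements of $S$ there lies an odd number ($\geq 1$) of positions, all carrying $1$ in the second row, so every maximal block of $1$s in the second row has length $1$ or an odd number $\geq 3$. Hence the second row is a run, which is the first clause (the second being vacuous). For the remaining three clauses one inspects, for each constituent of that run, which of the odd positions above it carry a $1$ in the first row: above a singleton $1$ (at an odd index $i$ with $i\pm1\in S$) the first row is $0$; above a length-$3$ block (occupying the odd, even, odd indices $i,i+1,i+2$ with $i-1,i+3\in S$) all three first-row entries are $0$; and a block $B$ of length $\geq 4$ is, as just noted, of odd length $\ell\geq 5$ and occupies indices $a,a+1,\dots,a+\ell-1$ with $a$ odd and $a-1,a+\ell\in S$, above which the first row is $0$ at $a$, at $a+\ell-1$, and at every even index in between, and is $1$ exactly at the interior odd indices $a+2,a+4,\dots,a+\ell-3$, since the two neighbours of such an index are even and lie strictly inside $B$ and so are not in $S$. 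Thus above $B$ the first row is precisely the nice run of singletons at $a+2,a+4,\dots,a+\ell-3$; its leftmost, and by the left--right symmetry its rightmost, group of $1$s is a singleton lying above the third position of $B$ (respectively the third position from the right), which is exactly what the alignment requirement demands. This exhausts all five clauses.

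The only genuine obstacle is bookkeeping: one must keep track of the parities of indices and treat the cyclic boundary with care --- in particular the degenerate case $|S|=1$, in which the second row is a single block of length $n-1$ wrapping around $C_n$ --- but beyond the explicit description of $\nicep$ in terms of $S$ no new idea is needed.
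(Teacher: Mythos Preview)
Your proof is correct and follows essentially the same approach as the paper's: both analyse the effect of the $V/N$ choices at the even indices and verify the clauses of properness case by case. Your version is a bit more systematic, parametrising the initial pattern by the set $S$ of positions where $N$ was chosen and deriving the closed formulas $\nicep(2,i)=0\iff i\in S$ and $\nicep(1,i)=1\iff i$ odd with $\{i-1,i+1\}\cap S=\emptyset$, whereas the paper argues directly from the local patterns $NN$, $NVN$, $NV\cdots VN$; the content is the same.
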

\begin{proof}
If the operations we perform in positions $i=0,2,\ldots,n-2$ are all $V$ or all $N$ then we get one of the patterns described in Lemma \ref{lem:mu0patterns}. If we perform $N$ in points $i$, $i+2$ we get a singleton in position $i+1$ of the second row with a $0$ above it. For a choice of $NVN$ in $i$, $i+2$, $i+4$ we get a block of length $3$ in the second row with $0$s above. Finally for a longer segment $NV\cdots VN$ the outcome is a block with a nice run of singletons starting and ending above the $3$rd position in the block. We can never get two adjacent $0$s in the second row, so it is a run. Here is a summary of the possible outcomes:
$$
\bordermatrix{&&&\mathbf{N}&&V&&V&&V&&V&&\mathbf{N}&&V&&\mathbf{N}&&\mathbf{N}&&\cr&\cdots&0&0&0&0&1&0&1&0&1&0&0&0&0&0&0&0&0&0&0&\cdots\cr&\cdots&1&0&1&1&1&1&1&1&1&1&1&0&1&1&1&0&1&0&1&\cdots}
$$
where the labels $V,N$ indicate which operation was applied at a position.
\end{proof}

Now we introduce the main tool: an invariant which splits proper patterns into classes which can be analyzed recursively.

\begin{definition}
\label{def:megadefinition}
For any proper pattern $\nicep$ we define the $\mu$-invariant as follows:
\begin{eqnarray*}
\mu(\nicep)&=&\mathrm{(\ number\ of\ blocks\ in\ the\ first\ row\ of\ }\nicep\mathrm{\ )} + \\
& & \mathrm{(\ number\ of\ blocks\ in\ the\ second\ row\ of\ }\nicep\mathrm{\ )}.
\end{eqnarray*}
\end{definition}

\begin{example}
All the proper patterns in Example \ref{ex:proper} have $\mu$-invariant $2$.
\end{example}

\begin{lemma}
\label{lem:mu0patterns}
For every proper pattern $\nicep$ we have $0\leq\mu(\nicep)\leq n/4$. The only patterns with $\mu(\nicep)=0$ are
$$
\nicep_1=\left(\begin{array}{ccccccc}1&0&1&0&\cdots&1&0\\ 1&1&1&1&\cdots&1&1\end{array}\right),\quad \nicep_2=\left(\begin{array}{ccccccc}0&0&0&0&\cdots&0&0\\ 1&0&1&0&\cdots&1&0\end{array}\right).
$$
\end{lemma}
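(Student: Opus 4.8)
The plan is to separate the two inequalities and, following the definition of a proper pattern, to distinguish whether the second row of $\nicep$ consists only of $1$s or is a run. The bound $\mu(\nicep)\ge 0$ is immediate, $\mu$ being a sum of two nonnegative integers. If the second row is all $1$s, the first row is a nice run; viewing it cyclically as groups (blocks of length $3$, or singletons) separated by single $0$s, if it has $b_1$ blocks and $s_1$ singletons then $n=4b_1+2s_1$, and since $\mu(\nicep)=b_1$ in this case, $\mu(\nicep)=(n-2s_1)/4\le n/4$, with equality exactly when $s_1=0$.

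The substantial case is when the second row is a run, with blocks $B_1,\dots,B_{b_2}$ of lengths $\ell_1,\dots,\ell_{b_2}\ge 3$ and $s_2$ singletons, separated by single $0$s, so that $n=\sum_j\ell_j+b_2+2s_2$. The crux is a local estimate: the number $\beta_j$ of first-row blocks lying above $B_j$ satisfies $\beta_j\le(\ell_j-3)/4$. This is clear when $\ell_j=3$ (the first row is $0$ there) and when no first-row block sits above $B_j$; otherwise, numbering the columns of $B_j$ by $1,\dots,\ell_j$, one writes the nice run above $B_j$ as groups separated by single $0$s, so that if it has $\beta_j$ blocks and $\sigma_j$ singletons its span from first $1$ to last $1$ has length $4\beta_j+2\sigma_j-1$, and one bounds that span by the columns it may occupy inside $B_j$: first $1$ in column $\ge 2$, last $1$ in column $\le\ell_j-1$, sharpened to column $3$ (resp.\ column $\ell_j-2$) whenever the corresponding boundary group is a block, by the ``third position'' clause of properness. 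This comparison forces $4\beta_j\le\ell_j-3$ in all sub-cases. Since first-row blocks lie above $1$s of the second row, $b_1=\sum_j\beta_j\le\frac14\big(\sum_j\ell_j-3b_2\big)$, hence $\mu(\nicep)=b_1+b_2\le\frac14\big(\sum_j\ell_j+b_2\big)\le\frac14 n$.

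For the classification, $\mu(\nicep)=0$ forces no block in either row. If the second row is all $1$s, the first row is then a nice run with no block, i.e.\ the cyclic alternation $1010\cdots$, which is $\nicep_1$; the alternative ``first row identically $0$'' is excluded since a nice run contains a $1$. If the second row is a run, it is likewise forced to be the cyclic alternation $1010\cdots$, and then — above each singleton the first row carries a $0$, and every $1$ of the first row sits above a $1$ of the second — the first row is identically $0$, which is $\nicep_2$. The one genuine difficulty is the local estimate $\beta_j\le(\ell_j-3)/4$: a naive span count only gives $4\beta_j\le\ell_j-1$, leading to $\mu(\nicep)\le\frac14(\sum_j\ell_j+3b_2)$, which need not be $\le n/4$; recovering the missing $2$ is exactly where one must exploit that boundary blocks of the overhead nice run are pushed one column inward relative to boundary singletons. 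Everything else is bookkeeping with the two counting identities for $n$.
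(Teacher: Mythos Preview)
Your argument is correct. The classification of the $\mu=0$ patterns matches the paper almost verbatim, but for the upper bound you take a genuinely different route.

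The paper proves $\mu(\nicep)\le n/4$ by a single pigeonhole step: to each block (in either row) it assigns four column indices --- for a second-row block its leftmost point, the point just to its left, and its two rightmost points; for a first-row block its three points and the one just to the left --- and then checks these $4\mu(\nicep)$ indices are pairwise disjoint. The only nontrivial disjointness, between a first-row block and the second-row block beneath it, comes down to exactly the ``third position'' clause you also exploit. This gives $4\mu(\nicep)\le n$ in one line once disjointness is granted.

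Your approach instead localises over second-row blocks: you prove the sharp estimate $\beta_j\le(\ell_j-3)/4$ for the number of first-row blocks sitting above a given second-row block of length $\ell_j$, by computing the span $4\beta_j+2\sigma_j-1$ of the overhead nice run and bounding it against the columns of $B_j$ available to it, with the endpoint refinements when a boundary group is a block. Summing over $j$ and using $n=\sum_j\ell_j+b_2+2s_2$ closes the argument. This is a bit more case-laden than the paper's packing argument, but it has the advantage of making the role of the ``third position'' clause completely explicit: you see precisely that a naive span bound loses $2$ at each boundary block and that the clause recovers it. The paper's proof hides this inside the disjointness verification. Both arguments ultimately rest on the same combinatorial fact about where first-row blocks may start and end.
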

\begin{proof}
If $\mu(\nicep)=0$ then $\nicep$ has no blocks in either row. If the second row is all-ones then the first row must be a nice run with no blocks, so $\nicep=\nicep_1$. Otherwise the second row is an alternating $0/1$ but then the first row cannot have a $1$ anywhere, so $\nicep=\nicep_2$.

Now consider the following map. To every block in the second row we associate its two rightmost points, its leftmost point and the immediate left neighbour of the leftmost point. To every block in the first row we associate its three points and the point immediately left. This way every block which contributes to $\mu(\nicep)$ is given $4$ points, and those sets are disjoint for different blocks. The only non-obvious part of the last claim follows since a block in  the first row does not have any point over the two outermost points of the block below it. That ends the proof of the upper bound. 
\end{proof}

Next come the crucial observations about operations on proper patterns and their $\mu$-invariants.

\begin{proposition}
\label{prop:propervn}
Suppose $\nicep$ is a proper, irreducible pattern and let $i$ be the index of the middle element of some block in the first row. Then $\nicep^{V,i}$ and $\nicep^{N,i}$ are proper and
$$\mu(\nicep^{V,i})=\mu(\nicep)-1,\quad \mu(\nicep^{N,i})=\mu(\nicep).$$
\end{proposition}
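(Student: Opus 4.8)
The plan is to analyze the local effect of the two operations on the structure of the block $B$ (of length $3$) in the first row whose middle element is $i$, and then check that each of the five defining conditions of a proper pattern is preserved. Throughout, the second row is untouched by both $V$ and $N$ except that $N$ may turn one $1$ in the second row (at position $i$) into a $0$; the first row changes only in positions $i-1,i,i+1$. So the strategy is: first describe exactly what the neighbourhood of columns $i-1,i,i+1$ looks like in $\nicep$, using that $B$ is a length-$3$ block of the first row sitting above a block $B'$ of the second row of length $\geq 4$ (it cannot sit above a singleton or a length-$3$ block by the third and fourth properness conditions, and since $\nicep$ is irreducible and $B$ is a genuine block the row beneath must be all $1$s there), then track $B$, $B'$ and the nice run above $B'$ through each operation.

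For the $V$ operation: setting $\nicep(1,i)=0$ splits the block $B=111$ into two singletons at positions $i-1$ and $i+1$, leaving the second row unchanged. I would check that the result is still a nice run above $B'$ (two singletons separated by a single $0$ is fine; the positioning constraints at the two ends of $B'$ are inherited since the outermost $1$s of $B$ stay put), all other conditions being untouched. The block count of the first row drops by exactly $1$ (one block becomes zero blocks), the second row's block count is unchanged, hence $\mu(\nicep^{V,i})=\mu(\nicep)-1$. For the $N$ operation: setting $\nicep(1,i-1)=\nicep(1,i)=\nicep(1,i+1)=\nicep(2,i)=0$ deletes the whole block $B$ from the first row and punches a $0$ into $B'$ at column $i$. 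The key point is that $i$ is the middle of $B$, which by the properness positioning rules lies at distance $\geq 2$ from both ends of $B'$; so removing $\nicep(2,i)$ splits $B'=1\cdots1$ into two pieces each of length $\geq 2$. I need: (i) each piece, together with whatever was above it, is either a length-$3$ block with $0$s above, or a longer block carrying a valid nice run — this is where the "middle of $B$" hypothesis does the work, since the truncated nice run above each piece still begins/ends at the $2$nd or $3$rd position from the new cut; (ii) neither piece has length $1$ or $2$ in a way that violates "no $0110$ or $1001$" — again guaranteed because the cut is at distance $\geq 2$ from the old ends and the nice run above $B$ kept the adjacent columns of $B'$ from being too short. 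The first row loses one block; the second row: one block of length $\geq 4$ is replaced by two blocks (if both pieces have length $\geq 3$) or by a block and a singleton, etc. — and the count of "blocks in the second row" goes up by exactly $1$ precisely when... here I must be careful. Actually the cleanest bookkeeping: $\mu$ counts blocks in both rows; removing $B$ costs $1$ block from row $1$; splitting $B'$ at an interior point at distance $\geq 2$ from each end turns one block into two structures whose combined block-count is one more than before *unless* a piece degenerates to a singleton, but the properness of $\nicep$ near the ends of $B'$ forces each piece to have length $\geq 3$, so it is always $+1$ in row $2$. Net: $\mu(\nicep^{N,i})=\mu(\nicep)-1+1=\mu(\nicep)$.

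The main obstacle I anticipate is case (i)–(ii) for the $N$ operation: verifying that after cutting $B'$ the two resulting blocks still satisfy the delicate end-positioning rules for their nice runs (the "$2$nd or $3$rd position" and "$3$rd position" conditions), and simultaneously that no forbidden $0110$/$1001$ pattern is created in the second row near the cut. This requires a careful enumeration of how the nice run above $B$ meets the rest of the nice run above $B'$, distinguishing whether the groups of $1$s immediately flanking $B$ in that run are singletons or length-$3$ blocks, and whether $B$ itself sits near an end of $B'$ or strictly in its interior. Everything else — the $V$ case, the conditions on the untouched parts of the pattern, and the arithmetic of $\mu$ — should be routine once the local picture near columns $i-1,i,i+1$ is pinned down.
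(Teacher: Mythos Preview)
Your proposal is correct and follows essentially the same approach as the paper: a local analysis near the block $B$, distinguishing whether $B$ sits at an end of the nice run above $B'$ or in its interior, and then verifying that the two pieces of $B'$ produced by the $N$-split each have length $\geq 3$ with correctly positioned nice runs above them. The paper's proof is organized around exactly the two cases you anticipate in your final paragraph, with explicit pictures that make the ``each piece has length $\geq 3$'' claim (which you correctly identify as the crux) immediate.
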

\begin{proof}
There are two cases, depending on whether the block of length $3$ centered at $i$ is, or is not, the outermost group of $1$s in its run. If it is the outermost one then it starts over the $3$rd element of the block below it. The two possible situations are depicted below. 
$$
\left(\begin{array}{ccccccccc}\cdots&0&0&0&1&1&1&0&\cdots \\ \cdots&0&1&1&1&1&1&1&\cdots\end{array}\right),\quad
\left(\begin{array}{ccccccccc}\cdots&1&0&1&1&1&0&1&\cdots \\ \cdots&1&1&1&1&1&1&1&\cdots\end{array}\right).
$$
In $\nicep^V$ the second row is the same as in $\nicep$. Above the current block we still have a nice run and its outermost $1$s are in the same positions. That means $\nicep^V$ is still proper. The number of blocks in the first row dropped by one, so $\mu(\nicep^V)=\mu(\nicep)-1$.

The proof for $\nicep^N$ depends on the two cases. In the first case an operation of type $N$ splits the block in the second row creating a new block of size $3$ with $0$s above it. In the second block that comes out of the splitting the first two $1$s have $0$s above them, so whatever run there was in $\nicep$ it is still there and starts in an allowed position. That means we get a proper pattern. One block was removed and one split into two, so $\mu$ does not change.

In the second case the situation is similar. We increase the number of blocks in the second row by one while removing one block from the first row. The two outermost positions in the new block(s) have $0$s above them, so the nice runs which remain above them start in correct positions. Again $\nicep^N$ is proper.
\end{proof}

\begin{proposition}
\label{prop:properr}
If $\nicep$ is a proper, reducible pattern then $\nicep^{R}$ is proper and
$$\mu(\nicep^{R})=\mu(\nicep).$$
\end{proposition}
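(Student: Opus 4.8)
The plan is to make the operation $R$ completely explicit and then read off both assertions. Write $S=\{i:\nicep(1,i)=1\}$ for the set of columns carrying a singleton in the first row of $\nicep$; these columns are pairwise non-adjacent. Unwinding the definition of $R$, the first row of $\nicep^R$ is the second row of $\nicep$ with every entry within distance $1$ of $S$ set to $0$, and the second row of $\nicep^R$ is an all-ones row with holes at exactly the columns of $S$:
$$\nicep^R(1,j)=\nicep(2,j)\cdot[\,\mathrm{dist}(j,S)\ge 2\,],\qquad \nicep^R(2,j)=[\,j\notin S\,].$$
Now I would invoke properness and reducibility: the first row of $\nicep$ has only singletons, and by the defining conditions of a proper pattern such a singleton can lie neither above a singleton nor above a length-$3$ block of the second row, hence lies above a block $B$ of length $\ge 4$ of the second row, at one of the columns of the prescribed arithmetic progression of common difference $2$ running from position $2$ or $3$ of $B$ to position $|B|-1$ or $|B|-2$ of $B$. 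Two consequences I would record at once: (i) no two elements of $S$ are at distance $1$ or $3$ (within one block the consecutive elements of $S$ are at distance exactly $2$, and elements of $S$ above two different blocks of the second row are at distance $\ge 4$); (ii) away from the distance-$\le 1$ neighbourhood of $S$ the formulas above say that $\nicep^R$ is $\nicep$ with its first row deleted and an all-ones row appended, while inside that neighbourhood (a disjoint union of short intervals, one per block of length $\ge 4$, each covering all but at most one column at either end of the block) $\nicep^R$ is forced into a fixed local shape — a $0$ above every column of $S$ and above every neighbour of such a column. The two degenerate cases are settled directly: if the second row of $\nicep$ is all ones then $\nicep=\nicep_1$ and $\nicep^R=\nicep_2$; if the first row of $\nicep$ is empty then $S=\emptyset$, the second row has no block of length $\ge 4$ (such a block would force ones above it), hence is a nice run, and $\nicep^R$ is that nice run placed above an all-ones row.

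To prove $\nicep^R$ is proper I would proceed from this description. By (i) the complement of $S$ has all its maximal $1$-runs of length $1$ or $\ge 3$, separated by single $0$s, so the second row of $\nicep^R$ is a run (or all ones, already handled). By (ii): each singleton of the second row of $\nicep$ carries a $0$ above it and lies far from $S$, hence shifts down to a singleton of the first row of $\nicep^R$ above a $1$; each length-$3$ block of the second row of $\nicep$ likewise shifts down to a length-$3$ block of the first row of $\nicep^R$; and the portion of the second row of $\nicep^R$ lying between the ``cores'' of two consecutive blocks of length $\ge 4$ of the second row of $\nicep$ (the core being the span from the first to the last element of $S$ above the block) is a single $1$-run $\beta$. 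When $\beta$ has length $\ge 4$, its row-$1$ content in $\nicep^R$ is precisely the nice run assembled from the shifted-down singletons and length-$3$ blocks that lay between these two blocks, plus an extra singleton at an end of $\beta$ exactly when the nice run above the adjacent block ended at position $|B|-2$ rather than $|B|-1$ (resp.\ started at position $3$ rather than $2$); this is a nice run, and its left/right alignment conditions for $\nicep^R$ reduce verbatim to the alignment conditions that held above the two blocks flanking $\beta$ in $\nicep$. When $\beta$ has length $3$ — which occurs only when the two flanking blocks are adjacent across a single $0$ with their extremal elements of $S$ at positions $|B|-1$ and $2$ — one checks that the first row of $\nicep^R$ carries $0$s above the three columns of $\beta$. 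Finally, each singleton of the second row of $\nicep^R$ is the single column between two elements of $S$ at distance $2$, and the first row of $\nicep^R$ carries a $0$ above it. Since these cases exhaust the singletons, length-$3$ blocks and blocks of length $\ge 4$ of the second row of $\nicep^R$, all proper-pattern conditions hold.

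For $\mu(\nicep^R)=\mu(\nicep)$: since $\nicep$ is reducible its first row has no blocks, so $\mu(\nicep)$ equals the number of blocks of the second row of $\nicep$, i.e.\ the number of its length-$3$ blocks plus the number $b$ of its blocks of length $\ge 4$. From the description above, the blocks of the first row of $\nicep^R$ are exactly the shifted-down length-$3$ blocks of the second row of $\nicep$ (every other $1$ in the first row of $\nicep^R$ is isolated), so they match. For the second row of $\nicep^R$ (with $S\ne\emptyset$; otherwise it is all ones and $b=0$): its maximal $1$-runs are in cyclic bijection with $S$, and such a run has length $1$ exactly when it is the single column between two consecutive elements of $S$ above one and the same block, giving $\sum_B(k_B-1)=|S|-b$ runs of length $1$, where $k_B\ge 1$ is the number of elements of $S$ above $B$; hence the number of blocks of the second row of $\nicep^R$ is $|S|-(|S|-b)=b$. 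Adding the two contributions gives $\mu(\nicep^R)=(\#\text{length-}3\text{ blocks})+b=\mu(\nicep)$.

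The step I expect to be the real obstacle is the alignment bookkeeping in the second paragraph: transferring the proper-pattern conditions from $\nicep$ to $\nicep^R$ requires splitting into the sub-cases ``the nice run above a given block starts at position $2$ vs.\ position $3$'' (and the mirror for the right end) and ``two neighbouring blocks of the second row are separated by a single $0$ vs.\ by further singletons and length-$3$ blocks,'' and in each sub-case pinning down exactly which columns lie within distance $1$ of $S$. Each sub-case is a short finite computation, but there are enough of them that the argument has to be organised carefully.
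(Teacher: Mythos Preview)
Your proposal is correct and follows essentially the same route as the paper's proof. Both arguments pivot on the observation that the first row of a reducible proper pattern contains neither $11$ nor $1001$ (your distance condition (i) on $S$), which forces the second row of $\nicep^R$ to be a run; both then perform a local case analysis at the ends of each block of that run, distinguishing whether the extremal element of $S$ above the neighbouring block sat at position $2$/$|B|{-}1$ or $3$/$|B|{-}2$; and both count $\mu$ by matching length-$3$ blocks of row~$2$ of $\nicep$ with blocks of row~$1$ of $\nicep^R$, and long blocks of row~$2$ of $\nicep$ with blocks of row~$2$ of $\nicep^R$. Your explicit formulas $\nicep^R(1,j)=\nicep(2,j)\cdot[\mathrm{dist}(j,S)\ge 2]$ and $\nicep^R(2,j)=[\,j\notin S\,]$ and the counting via $|S|-\sum_B(k_B-1)=b$ make the $\mu$-computation slightly cleaner than the paper's verbal argument, but the substance is identical.
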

\begin{proof}
Consider first the case when $\nicep$ has only $0$s in the first row. Then the second row is a run with blocks only of size $3$ (because a longer block would require something above it). This means $\nicep^R$ has a full second row with a nice run in the first row. Such pattern is proper and $\mu(\nicep^R)=\mu(\nicep)$ as we count the same blocks.

Now we move to the case when $\nicep$ has at least one $1$ in the first row. Note that $\nicep(1,i)=1$ if and only if $\nicep^R(2,i)=0$. That last condition implies that the second row of $\nicep^R$ is a run (as the first row of $\nicep$ does not contain the sequences $11$ nor $1001$).

If $\nicep^R(2,i)$ is a singleton $1$ then in $\nicep$ we must have had $\nicep(1,i-1)=\nicep(1,i+1)=1$ but then $\nicep(2,i)$ was erased by the operation $R$ and therefore $\nicep^R(1,i)=0$. This proves $\nicep^R$ has zeroes above singletones of the second row.

Now consider any block $B$ in the second row of $\nicep^R$ and assume without loss of generality that it occupies positions $1,\ldots,l$, hence $\nicep(1,0)=\nicep(1,l+1)=1$, $\nicep^R(2,0)=\nicep^R(2,l+1)=0$ and $\nicep(1,i)=0$ for all $1\leq i\leq l$. It means that the situation in $\nicep$ must have looked like one of these (up to symmetry):

\[
\begin{array}{ccccccccccccc}
                   &   & 0 &   &   &   &      &   &   &   &l+1&   &\\
\ldelim({2}{0.5em} & 0 & 1 & 0 & 0 & 0 &      & 0 & 0 & 0 & 1 & 0 & \rdelim){2}{0.5em} \\
                   & 1 & 1 & 1 & 0 & 1 &\cdots& 1 & 0 & 1 & 1 & 1 &\\
                   &   & 0 & B & B & B &      & B & B & B & 0 &   &\\
\end{array}
\]

\[
\begin{array}{ccccccccccccc}
                   &   & 0 &   &   &   &      &   &   &   &l+1&   &\\
\ldelim({2}{0.5em} & 0 & 1 & 0 & 0 & 0 &      & 0 & 0 & 0 & 1 & 0 & \rdelim){2}{0.5em} \\
                   & 1 & 1 & 1 & 0 & 1 &\cdots& 0 & 1 & 1 & 1 & 1 &\\
                   &   & 0 & B & B & B &      & B & B & B & 0 &   &\\
\end{array}
\]

\[
\begin{array}{ccccccccccccc}
                   &   & 0 &   &   &   &      &   &   &   &l+1&   &\\
\ldelim({2}{0.5em} & 0 & 1 & 0 & 0 & 0 &      & 0 & 0 & 0 & 1 & 0 & \rdelim){2}{0.5em} \\
                   & 1 & 1 & 1 & 1 & 0 &\cdots& 0 & 1 & 1 & 1 & 1 &\\
                   &   & 0 & B & B & B &      & B & B & B & 0 &   &\\
\end{array}
\]

The letters $B$ indicate where the block $B$ will stretch in what will become the future second row of $\nicep^R$. The $1$s in $\nicep(1,0)$ and $\nicep(1,l+1)$ must be located above the $2$nd or $3$rd element of a block. The part of the second row in $\nicep$ denoted by $\cdots$ is a run with no $1$s above, so it must be a nice run. It follows that in $\nicep^R$ above $B$ we will get a nice run and by checking the three cases we see that the run starts above the $2$nd or $3$rd element of $B$, and it only starts above the $2$nd element if it has a singleton there. 

There is just one way in which we can obtain a block $B$ of size $3$:
\[
\begin{array}{ccccccc}
                   & 0 &   &   &   &l+1&\\
\ldelim({2}{0.5em} & 1 & 0 & 0 & 0 & 1 &\rdelim){2}{0.5em} \\
                   & 1 & 1 & 0 & 1 & 1 &\\
                   & 0 & B & B & B & 0 &\\
\end{array}
\]
and then the operation $R$ will erase everything above that block. This completes the check that the pattern $\nicep^R$ is proper.

It remains to compute $\mu(\nicep^R)$. Our previous discussion implies that:
\begin{itemize}
\item every block of size $3$ in the second row of $\nicep$ becomes a block in the first row of $\nicep^R$,
\item every two consecutive blocks longer than $3$ yield, between them, a block $B$ in the second row of $\nicep^R$,
\end{itemize}
and every block in $\nicep^R$ arises in this way. It means that every block in $\nicep$ contributes one to the count of blocks in $\nicep^R$ (in either first or second row). That proves $\mu(\nicep^R)=\mu(\nicep)$.
\end{proof}

%========================================================================
\section{An example: $n=6$}
\label{sect:example6}

First of all the value $Z(P_m\times C_6)=Z(\nicei;m)$ for the all-ones pattern $\nicei$ splits into a linear combination of $Z$-values for the following patterns.
\begin{eqnarray*}
\mathcal{A}=\bordermatrix{&V&&V&&V&\cr&0&1&0&1&0&1\cr&1&1&1&1&1&1} & \mathcal{B}=\bordermatrix{&V&&V&&N&\cr&0&1&0&0&0&0\cr&1&1&1&1&0&1}\\
\mathcal{C}=\bordermatrix{&V&&N&&N&\cr&0&0&0&0&0&0\cr&1&1&0&1&0&1} & \mathcal{D}=\bordermatrix{&N&&N&&N&\cr&0&0&0&0&0&0\cr&0&1&0&1&0&1}
\end{eqnarray*}
The labels $V,N$ indicate which operation was applied to the particular position $i=0,2,4$. Any other pattern we get is isomorphic to one of these, and Lemma \ref{lem:zvn} unfolds recursively into:
\begin{equation}
\label{eq:6split}
Z(\nicei;m)=Z(\mathcal{A};m)-3Z(\mathcal{B};m)+3Z(\mathcal{C};m)-Z(\mathcal{D};m).
\end{equation}
We also see that (Definition \ref{def:megadefinition}):
$$\mu(\mathcal{A})=0+0=0,\ \mu(\mathcal{B})=0+1=1,\ \mu(\mathcal{C})=0+1=1,\ \mu(\mathcal{D})=0+0=0.$$
All the patterns we have now are reducible. The first obvious reductions are
$$\mathcal{A}^R=\mathcal{D},\quad \mathcal{D}^R=\mathcal{A}$$
and by Lemma \ref{lem:zr} they lead to
$$Z(\mathcal{D};m)=Z(\mathcal{A};m-1),\quad Z(\mathcal{A};m)=-Z(\mathcal{D};m-1)=-Z(\mathcal{A};m-2).$$
It means that given the initial conditions for $Z(\mathcal{A};m)$ and $Z(\mathcal{D};m)$ we have now completely determined those sequences and their generating functions. 

Note that $\mathcal{A}$ and $\mathcal{D}$ had $\mu$-invariant $0$. Now we move on to the patterns with the next $\mu$-invariant value $1$. We can reduce $\mathcal{B}$ (even three times) and $\mathcal{C}$ and apply Lemma \ref{lem:zr}:
$$\mathcal{B}^{RRR}=\mathcal{E},\ \mathcal{C}^R=\mathcal{E}; \quad Z(\mathcal{B};m)=-Z(\mathcal{E};m-3),\ Z(\mathcal{C};m)=Z(\mathcal{E};m-1)$$
where
$$
\mathcal{E}=\left(\begin{array}{cccccc}1&0&1&1&1&0\\1&1&1&1&1&1\end{array}\right).
$$
Still $\mu(\mathcal{E})=1$. Now we can apply a $V,N$-type splitting in the middle of the length $3$ block in the first row of $\mathcal{E}$, as in Proposition \ref{prop:propervn}. We have by Lemma \ref{lem:zvn}:
$$\mathcal{E}^V=\mathcal{A},\ \mathcal{E}^N=\mathcal{B}; \quad \quad Z(\mathcal{E};m)=Z(\mathcal{A};m)- Z(\mathcal{B};m)=Z(\mathcal{A};m)+ Z(\mathcal{E};m-3)$$
where $\mu(\mathcal{A})=0$, so the sequence $Z(\mathcal{A};m)$ is already known.

This recursively determines all the sequences and it is a matter of a mechanical calculation to derive their generating functions (some care must be given to the initial conditions). We can also check periodicities directly. The sequences with $\mu$-invariant $0$ are $4$-periodic:
$$Z(\mathcal{A};m)=-Z(\mathcal{A};m-2)=Z(\mathcal{A};m-4)$$
and those with $\mu$-invariant $1$ are $12$-periodic:
$$Z(\mathcal{E};m)=Z(\mathcal{A};m)+Z(\mathcal{A};m-3)+Z(\mathcal{A};m-6)+Z(\mathcal{A};m-9)+Z(\mathcal{E};m-12)=Z(\mathcal{E};m-12)$$
since $Z(\mathcal{A};m)=-Z(\mathcal{A};m-6)$. By (\ref{eq:6split}) this means $12$-periodicity of $Z(P_m\times C_6)$.

%========================================================================
\section{Proof of the Theorem~\ref{thm:genfun}}
\label{sect:weakproof}
Everything is now ready to prove Theorem \ref{thm:genfun}. We are going to deduce it from a refined version given below. Throughout this section an even number $n$ is still fixed. For any pattern $\nicep$ of length $n$ define the generating function
\begin{equation}
\label{eqn:fp}
f_\nicep(t)=\sum_{m=0}^\infty Z(\nicep;m)t^m.
\end{equation}
\begin{proposition}
\label{prop:weakrefined}
For any proper pattern $\nicep$ the function $f_\nicep(t)$ is a rational function such that all zeroes of its denominator are complex roots of unity.
\end{proposition}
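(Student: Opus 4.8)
The plan is to prove Proposition~\ref{prop:weakrefined} by induction on the $\mu$-invariant, using the three pattern operations $V$, $N$ and $R$ as the engine of the recursion. The base case $\mu(\nicep)=0$ is handled directly: by Lemma~\ref{lem:mu0patterns} the only proper patterns with $\mu=0$ are $\nicep_1$ and $\nicep_2$, and one checks that $\nicep_1^R=\nicep_2$ and $\nicep_2^R=\nicep_1$ (up to isomorphism), so by Lemma~\ref{lem:zr} the sequences $Z(\nicep_1;m)$ and $Z(\nicep_2;m)$ satisfy $Z(\nicep_1;m)=\pm Z(\nicep_1;m-2)$ for $m$ large, hence are eventually periodic; their generating functions are therefore rational with denominators dividing $1-t^2$ (times a polynomial correction coming from the finitely many small values of $m<2$ where the recursions do not yet apply), whose zeroes are roots of unity.

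For the inductive step, fix a proper pattern $\nicep$ with $\mu(\nicep)=d\geq 1$ and assume the statement holds for all proper patterns of smaller $\mu$-invariant. There are two cases. If $\nicep$ is reducible, apply the operation $R$: by Proposition~\ref{prop:properr}, $\nicep^R$ is proper with $\mu(\nicep^R)=\mu(\nicep)=d$, which does \emph{not} decrease $\mu$, so $R$ alone cannot drive the induction. The point is that $R$ strictly reduces some other complexity measure of the pattern — I would use the number of $1$s in the first row (or, if that is zero, declare the pattern irreducible after one more step), and run a secondary induction on that quantity for patterns of fixed $\mu$. Each application of $R$ either produces an irreducible pattern of the same $\mu$, or a reducible pattern with strictly fewer $1$s in the first row; iterating, after finitely many $R$-steps we reach an irreducible proper pattern $\nicep'$ with $\mu(\nicep')=d$, and Lemma~\ref{lem:zr} gives $f_\nicep(t)=\pm t^{k}f_{\nicep'}(t)$ for the appropriate shift $k$. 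It therefore suffices to treat irreducible proper patterns.

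If $\nicep$ is irreducible and proper with $\mu(\nicep)=d$, then by definition its first row contains a block of length $3$; let $i$ be the index of its middle element. Apply Lemma~\ref{lem:zvn}: $Z(\nicep;m)=Z(\nicep^{V,i};m)-Z(\nicep^{N,i};m)$ for $m\geq 2$, hence (up to a polynomial coming from the boundary term at $m<2$)
\begin{equation*}
f_\nicep(t)=f_{\nicep^{V,i}}(t)-f_{\nicep^{N,i}}(t)+(\text{polynomial}).
\end{equation*}
By Proposition~\ref{prop:propervn}, $\nicep^{V,i}$ and $\nicep^{N,i}$ are proper, with $\mu(\nicep^{V,i})=d-1$ and $\mu(\nicep^{N,i})=d$. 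The first term is handled by the outer induction on $\mu$. The second term has the same $\mu$ but I claim it is strictly simpler in a secondary sense: the operation $N$ increases the number of blocks in the second row and decreases the number in the first, so repeated applications of $V$/$N$ at successive length-$3$ blocks of the first row eventually exhaust the first row entirely, at which point the pattern becomes reducible and we fall back to the $R$-case. Formally, I would set up a well-founded measure — e.g. the lexicographic pair $(\mu(\nicep),\ \text{number of blocks in the first row})$, noting that $V$ drops the first coordinate and $N$ drops the second while fixing the first, and $R$ (by the argument of the previous paragraph) does not increase either — and argue that every proper pattern reduces, through finitely many $V$, $N$, $R$ operations, to the two $\mu=0$ patterns. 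Since each operation expresses $f_\nicep$ as a $\zet[t,t^{-1}]$-linear combination (in fact with coefficients $\pm 1$, $\pm t^k$) of the $f$'s of the resulting patterns plus a polynomial correction, and the class of rational functions whose denominators have only roots-of-unity zeroes is closed under such combinations (a common denominator can be taken to be a product of cyclotomic-type factors $1-t^j$), the result propagates back up to $\nicep$.

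The main obstacle I anticipate is not any single step but the bookkeeping needed to make the termination argument airtight: one must exhibit an explicit well-founded order on proper patterns under which each of $V$, $N$, $R$ strictly descends (or descends in a secondary coordinate while fixing the primary one), and simultaneously track the finitely many low-$m$ boundary terms so that they only ever contribute genuine polynomials — never spurious poles — to $f_\nicep(t)$. A secondary subtlety is confirming that $R$ applied repeatedly to a reducible proper pattern genuinely terminates in an irreducible one of the same $\mu$ (rather than cycling), which is where the "number of $1$s in the first row strictly decreases under $R$ unless the pattern is already irreducible" observation does the real work.
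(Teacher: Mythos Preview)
Your termination argument has a genuine gap: the well-founded measure you propose does not exist. The operation $R$ \emph{can} increase both the number of $1$s and the number of blocks in the first row. Concretely, in the paper's $n=6$ example (Section~\ref{sect:example6}), $\mathcal{B}$ has a single $1$ in its first row while $\mathcal{B}^R$ has two; and after two more reductions $\mathcal{B}^{RRR}=\mathcal{E}$ is irreducible with one first-row block, so your second coordinate jumps from $0$ back to $1$. More to the point, the whole orbit $\mathcal{E}\xrightarrow{N}\mathcal{B}\xrightarrow{R}\cdots\xrightarrow{R}\mathcal{E}$ is a genuine cycle among proper patterns with $\mu=1$: no strictly decreasing invariant can separate them. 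Your concluding paragraph correctly flags this as ``the real work'', but the claim you rely on there is simply false.

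The paper's proof does not try to terminate at all. Instead it embraces the cycling: on the finite set of proper patterns with a fixed $\mu$-value it builds the functional directed graph with edges $\nicep\to\nicep^R$ (if $\nicep$ is reducible) and $\nicep\to\nicep^{N,i}$ (if not). Walking once around the cycle containing $\nicep$ and recording the identities from Lemmas~\ref{lem:zvn} and~\ref{lem:zr} yields
\[
Z(\nicep;m)=\pm Z(\nicep;m-a)+\sum_{\mathcal{R}}\pm Z(\mathcal{R};m-b_{\mathcal{R}}),
\]
where each $\mathcal{R}$ has $\mu(\mathcal{R})=\mu(\nicep)-1$ (these come from the $V$-branch at each $N$-step) and $a$ is the number of $R$-edges on the cycle. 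Since repeated $N$ strictly reduces the number of first-row blocks, every cycle must contain at least one $R$-edge, so $a\geq 1$. This single recurrence in $m$ is what produces the factor $1\mp t^{a}$ in the denominator and closes the induction on $\mu$. The missing idea in your attempt is precisely this: the shift in $m$ contributed by $R$ is what makes the recursion solvable, not any descent on the pattern side.
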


The sequence $Z(P_m\times C_n)$ is a linear combination of sequences $Z(\nicep;m)$ for initial patterns $\nicep$ (modulo initial conditions). Every initial pattern is proper (Lemma \ref{lem:initialproper}), so Theorem \ref{thm:genfun} follows. It remains to prove Proposition \ref{prop:weakrefined}, and this is done along the lines of the example in Section \ref{sect:example6}.

\begin{proof}
We will prove the statement by induction on the $\mu$-invariant of $\nicep$. If $\mu(\nicep)=0$, then $\nicep$ is one of the patterns from Lemma \ref{lem:mu0patterns}. Each of them satisfies $\nicep^{RR}=\nicep$ and by Lemma \ref{lem:zr}:
$$Z(\nicep;m)=(-1)^{n/2}Z(\nicep;m-2)$$
hence $f_\nicep(t)$ has the form
$$\frac{a+bt}{1-(-1)^{n/2}t^2}.$$

Now consider any fixed value $\mu>0$ of the $\mu$-invariant and suppose the result was proved for all proper patterns with smaller $\mu$-invariants. Consider the directed graph whose vertices are all proper patterns with that invariant $\mu$. For any reducible $\nicep$ there is an edge $\nicep\to\nicep^R$ and for any irreducible $\nicep$ there is an edge $\nicep\to\nicep^N$ for some (only one) choice of $N$-type operation in the middle of a block. Since the graph is finite and the outdegree of each vertex is $1$, it consists of directed cycles with some attached trees pointing towards the cycles.

If $\nicep$ is a vertex on one of the cycles, then by moving along the cycle and performing the operations prescribed by the edges we will get back to $\nicep$ and obtain (Lemmas \ref{lem:zvn}, \ref{lem:zr} and Propositions \ref{prop:propervn},\ref{prop:properr}) a recursive equation of the form
\begin{equation}\label{recursiveeq}Z(\nicep;m)=\pm Z(\nicep;m-a)+\sum_\mathcal{R}\pm Z(\mathcal{R};m-b_\mathcal{R})\end{equation}
where $a>0$, $b_\mathcal{R}\geq 0$ and $\mathcal{R}$ runs through some proper patterns with invariant $\mu-1$ (Proposition \ref{prop:propervn}). Now the result follows by induction (the generating function $f_\nicep(t)$ will add an extra factor $1\pm t^a$ to the denominator coming from the combination of functions $f_\mathcal{R}(t)$).

If $\nicep$ is not on a cycle then it has a path to a cycle and the result follows in the same way.
\end{proof}

\begin{remark}
It is also clear that in order to prove Conjecture~\ref{thm:genfunmedium} one must have better control over the cycle lengths in the directed graph appearing in the proof. We will construct a more accessible model for this in the next section, see Theorem~\ref{thm:necklace-to-patterns}.
\end{remark}

%========================================================================
\section{Necklaces}
\label{section:neck}

In this section we describe an appealing combinatorial model which encodes the reducibility relation between patterns. As before $n$ is an even positive integer and $k$ is any positive integer.

We define a \textbf{\emph{$(k,n)$-necklace}}. It is a collection of $2k$ points (\emph{stones}) distributed along the circumference of a circle of length $n$, together with an assignment of a number from $\{-2,-1,1,2\}$ to each of the stones. We call these numbers \emph{stone vectors} and we think of them as actual short vectors attached to the stones and tangent to the circle. The vector points $1$ or $2$ units clockwise (positive value) or anti-clockwise (negative value) from each stone and we say a stone \emph{faces} the direction of its vector. See Fig.\ref{fig:neck1} for an example worth more than a thousand words.

During a \emph{jump} a stone moves $1$ or $2$ units along the circle in the direction and distance prescribed by its vector. The configuration of stones and vectors is subject to the following conditions:
\begin{itemize}
\item consecutive stones face in opposite directions,
\item if two consecutive stones face away from each other then their distance is an odd integer,
\item if two consecutive stones face towards each other then their distance \emph{plus} the lengths of their vectors is an odd integer,
\item if two consecutive stones face towards each other then their distance is at least $3$; moreover if their distance is exactly $3$ then their vectors have length $1$.
\end{itemize}
The last two conditions can be conveniently rephrased as follows: if two stones facing towards each other simultaneously jump then after the jump their distance will be an odd integer and they will not land in the same point nor jump over one another.

We identify $(k,n)$-necklaces which differ by an isometry of the circle. Clearly the number of $(k,n)$-necklaces is finite.

\begin{figure}
\begin{center}
\begin{tabular}{cc}
\includegraphics{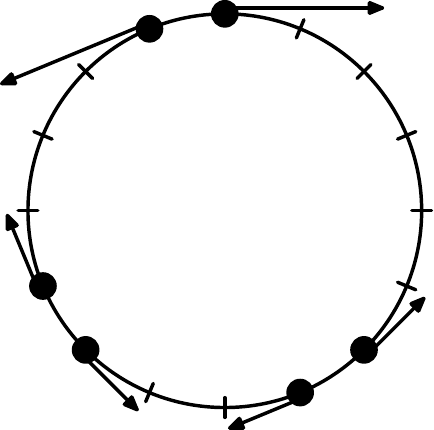} & \includegraphics{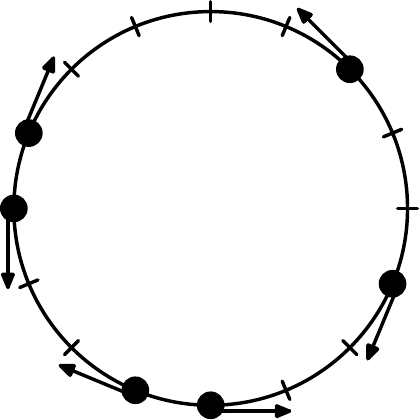}\\
a) & b)
\end{tabular}
\caption{\label{fig:neck1}Two sample $(3,16)$-necklaces. Each has $6$ stones. The arrows are stone vectors of length $1$ (shorter) or $2$ (longer). The necklace in b) is the image under $T$ of the necklace in a).}
\end{center}
\end{figure}

Next we describe a \textbf{necklace transformation $T$} which takes a $(k,n)$-necklace and performs the following operations:
\begin{itemize}
\item (JUMP) all stones jump as dictated by their vectors,
\item (TURN) all stone vectors change according to the rule
$$-2\to 1, \quad -1\to 2, \quad 1\to -2, \quad 2\to -1,$$
i.e. both direction and length are switched to the other option,
\item (FIX) if any two stones find themselves in distance $3$ facing each other and any of their vectors has length $2$, then adjust the offending vectors by reducing their length to $1$.
\end{itemize}
An example of $N$ and $TN$ is shown in Fig.\ref{fig:neck1}. It is easy to check that if $N$ is a $(k,n)$-necklace then so is $TN$. 

\begin{definition}
\label{def:necklace-graph}
Define $\neck(k,n)$ to be the directed graph whose vertices are all the isomorphism classes of $(k,n)$-necklaces and such that for each $(k,n)$-necklace $N$ there is a directed edge $N\to TN$.
\end{definition}

\begin{lemma}
\label{lemma:neck-general}
The graph $\neck(k,n)$ is nonempty if and only if $1\leq k\leq n/4$ and it is always a disjoint union of directed cycles.
\end{lemma}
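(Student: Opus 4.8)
The plan is to establish three separate facts: (1) every vertex of $\neck(k,n)$ has out-degree exactly $1$; (2) every vertex has in-degree exactly $1$; (3) the vertex set is nonempty precisely when $1\leq k\leq n/4$. The first fact is immediate from Definition \ref{def:necklace-graph}, since $T$ is a well-defined function on isomorphism classes of $(k,n)$-necklaces (the remark after the definition of $T$ already asserts $TN$ is again a $(k,n)$-necklace). Once (1) and (2) are known, a finite directed graph in which every vertex has in-degree and out-degree $1$ is automatically a disjoint union of directed cycles, which is the structural claim; and then (3) pins down exactly when the graph is nonempty.

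For (2), the key step is to show $T$ is \emph{injective} on $(k,n)$-necklaces; combined with finiteness of the vertex set this forces $T$ to be a bijection, hence in-degree $1$ everywhere. To see injectivity I would construct the inverse explicitly. The TURN rule $-2\to1,\,-1\to2,\,1\to-2,\,2\to-1$ is an involution on $\{-2,-1,1,2\}$, so it can be undone; the subtlety is that JUMP and FIX are not individually invertible. The idea is that FIX only activates for a pair of stones facing \emph{towards} each other at distance exactly $3$ with a length-$2$ vector, and after TURN such a configuration is recognizable: a stone that has just been FIXed has a length-$1$ vector pointing away from a stone at distance $3$ (because TURN flips direction after the jump). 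So to invert $T$: first detect all pairs of consecutive stones at distance $3$ where the geometry is consistent with a FIX having occurred, tentatively restore the length-$2$ vector there, then apply TURN again (undoing it), then JUMP each stone backwards along its restored vector. The necklace axioms — especially the third and fourth conditions, which say that stones facing toward each other never land on the same point or jump past each other — are exactly what guarantees this backward procedure lands on a legitimate $(k,n)$-necklace and that the FIX-detection is unambiguous. I would check that this reconstruction is well-defined on each local configuration (stones facing away, stones facing toward) and that it genuinely inverts $T$; this case analysis is the main obstacle, since one must verify that no information is lost and that the tentative FIX-restoration never conflicts with the necklace conditions.

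For (3), the "only if" direction: $2k$ stones around a circle of length $n$ with consecutive stones facing opposite ways split the circle into $2k$ arcs alternating between "facing-away" gaps and "facing-toward" gaps; summing the arc lengths gives $n$, and since each facing-toward gap has length at least $3$ (fourth axiom) while each facing-away gap has length at least $1$ (being a positive odd integer), we get $n\geq 3k+k=4k$, i.e. $k\leq n/4$; and $k\geq1$ is built into the definition. For the "if" direction one exhibits an explicit $(k,n)$-necklace for each $k$ in range: for instance alternate $k$ short "facing-away" gaps and $k$ "facing-toward" gaps, distributing the surplus length $n-4k$ (which is divisible by $2$ since $n$ is even, though one should double-check parity works out for each gap type) among the facing-toward gaps while keeping all the parity conditions satisfied; a concrete assignment of stone vectors — e.g. all length $1$ — then satisfies every axiom. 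Verifying that such a configuration exists for every admissible $(k,n)$, with the parity bookkeeping of the second and third axioms, is routine but should be spelled out; this is where I would be careful, since the odd-distance requirements interact with the even $n$ in a way that constrains how the slack $n-4k$ can be allocated.
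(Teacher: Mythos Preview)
Your plan follows essentially the same route as the paper: out-degree $1$ is immediate, in-degree $1$ comes from exhibiting an explicit inverse to $T$, and the bound $k\le n/4$ is a length count. Two remarks are in order.

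First, a slip in your FIX analysis: since FIX is applied \emph{after} TURN in the definition of $T$, a stone that has just been FIXed has its length-$1$ vector pointing \emph{toward} (not away from) its partner at distance $3$. In fact one can check that every distance-$3$ facing-toward pair in $TN$ necessarily arose from a distance-$1$ facing-away pair with both vectors of length $1$ in $N$, so the FIX detection you worry about is actually unambiguous. The paper's formulation of $T^{-1}$ is cleaner than your restore--undo-TURN--reverse-jump recipe: for each stone \emph{not} in a distance-$3$ facing-toward pair, swap the vector length $1\leftrightarrow 2$ while keeping the direction; then let every stone jump; then reverse every direction. You may find it easier to verify $TT^{-1}N=T^{-1}TN=N$ for this description.

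Second, your arc-length inequality $n\ge 3k+k=4k$ is a perfectly good alternative to the paper's argument (which assigns to each stone a length-$2$ open arc and checks disjointness). Your treatment of the existence direction, which the paper leaves implicit, is also fine: with all stone vectors of length $1$, each facing-toward gap must be odd and $\ge 3$, each facing-away gap must be odd and $\ge 1$; since $n-4k$ is even you can dump the entire surplus onto a single facing-toward gap and leave the others at their minimum lengths.
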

\begin{proof}
To each stone whose vector faces clockwise we associate the open arc segment of length $2$ from that stone in the direction of its vector. To each stone whose vector faces counter-clockwise we associate the open arc segment of length $2$ of which this stone is the midpoint. The segments associated to different stones are disjoint hence $2k\cdot 2\leq n$, as required (compare the proof of Lemma \ref{lem:mu0patterns}). 

The out-degree of every vertex in $\neck(k,n)$ is $1$, so it suffices to show that the in-degree is at least $1$. Given a $(k,n)$-necklace $N$ let $T^{-1}$ be the following operations:
\begin{itemize}
\item for each stone which \emph{does not} face towards another stone in distance $3$, change the stone vector according to the rule
$$-2\to -1, \quad -1\to -2, \quad 1\to 2, \quad 2\to 1,$$
\item jump with all the stones,
\item change all stone vectors according to the rule
$$-2\to 2, \quad -1\to 1, \quad 1\to -1, \quad 2\to -2.$$
\end{itemize}
One easily checks that $T^{-1}N$ is a $(k,n)$-necklace and that $TT^{-1}N=T^{-1}TN=N$.
\end{proof}

Some boundary cases of $\neck(k,n)$ are easy to work out.
\begin{lemma}
\label{lemma:neck-small}
For any even $n$ the graph $\neck(1,n)$ is a cycle of length $n-3$. For any $k$ the graph $\neck(k,4k)$ is a single vertex with a loop. For any $k$ the graph $\neck(k,4k+2)$ is a cycle of length $k+2$ and $\lfloor k/2 \rfloor$ isolated vertices with loops.
\end{lemma}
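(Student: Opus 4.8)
Building on Lemma~\ref{lemma:neck-general}, each $\neck(k,n)$ is a disjoint union of directed cycles, so every clause reduces to counting the vertices and then pinning down the orbit structure of $T$.

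\emph{The case $\neck(1,n)$.} A $(1,n)$-necklace has two stones, and ``consecutive stones face in opposite directions'' forces them to face each other across exactly one of the two arcs they cut out. Writing $d$ for the length of that ``towards'' arc, the necklace conditions translate into: $n-d$ is odd (hence $d$ is odd, since $n$ is even); $d+\ell_1+\ell_2$ is odd, hence $\ell_1=\ell_2=:\ell$; $d\ge 3$; and $d=3$ forces $\ell=1$. So $(1,n)$-necklaces correspond bijectively to pairs $(d,\ell)$ with $d\in\{3,5,\dots,n-1\}$, $\ell\in\{1,2\}$, and $\ell=1$ whenever $d=3$, of which there are $(n/2-1)+(n/2-2)=n-3$. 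A direct computation of $T$ in these coordinates gives $(d,1)\mapsto(n-d+2,2)$ for $d<n-1$, $(n-1,1)\mapsto(3,1)$, and $(d,2)\mapsto(n-d+4,1)$. I would then check that $\phi$ defined by $\phi(d,1)=d-3$ and $\phi(d,2)=n-d$ is a bijection onto $\zet/(n-3)$ with $\phi(TN)=\phi(N)+1$, which identifies $\neck(1,n)$ with a single directed $(n-3)$-cycle.

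\emph{The case $\neck(k,4k)$.} In the packing argument inside the proof of Lemma~\ref{lemma:neck-general} the inequality $4k\le n$ becomes an equality, so the $2k$ arcs of length $2$ tile the circle with nothing to spare. Since consecutive stones face oppositely they alternate clockwise/counter-clockwise, and forced abutment of consecutive arcs makes the successive stone-distances equal to $3$ (for a towards pair) and $1$ (for an away pair) alternately. Every stone then lies in a towards pair at distance exactly $3$, so all stone vectors have length $1$; hence there is a unique $(k,4k)$-necklace, and one verifies directly that $T$ carries it to the same configuration rotated by $2$, i.e.\ fixes its isometry class. So $\neck(k,4k)$ is one vertex with a loop.

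\emph{The case $\neck(k,4k+2)$.} Here the packing leaves slack exactly $2$. The same abutment analysis shows every towards-gap has distance $3$ plus a nonnegative excess, every away-gap has odd distance $1$ plus an even excess, and the excesses sum to $2$; hence exactly one of (E) one away pair at distance $3$, rest tight; (D) one towards pair at distance $5$, rest tight; (F) two towards pairs at distance $4$, rest tight. The parity rule $d+\ell_1+\ell_2$ odd then forces the vector lengths: in (E) all are $1$; in (D) all are $1$ except the two in the distance-$5$ pair, both $1$ or both $2$; in (F) each distance-$4$ pair carries exactly one vector of length $2$. Counting up to the dihedral symmetry of the circle I expect one necklace of type (E), two of type (D), and $(k-1)+\lfloor k/2\rfloor$ of type (F) (the position of the second distance-$4$ gap relative to the first, together with the two binary choices of which stone in each distance-$4$ pair is long, modulo rotations and reflection), for a total of $k+2+\lfloor k/2\rfloor$. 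Finally I would compute $T$ on a representative of each class: the $\lfloor k/2\rfloor$ type-(F) necklaces whose two long vectors sit in mirror-symmetric positions should turn out to be fixed by $T$ (as in the $4k$ case, $T$ merely rotates them), while the remaining $k+2$ necklaces fall into a single $T$-cycle, to be verified either by listing successive images or, preferably, by exhibiting a conjugacy with $\zet/(k+2)$ analogous to the one above.

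The hard part is entirely in the $\neck(k,4k+2)$ case: getting the count of type-(F) necklaces exactly right under the dihedral symmetry, and then pushing $T$ through every class — especially the FIX step, which is precisely where a distance-$4$ pair carrying a length-$2$ vector is created or destroyed — to confirm which two-$4$ necklaces are the $\lfloor k/2\rfloor$ loops and that everything else closes up into one $(k+2)$-cycle. The first two cases should be routine once the packing-equality picture is in place.
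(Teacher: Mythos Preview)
Your treatment of $\neck(1,n)$ and $\neck(k,4k)$ matches the paper's proof essentially line for line; the explicit conjugacy $\phi$ with $\zet/(n-3)$ is a nice addition, since the paper merely writes out the images $TA_{n,d}^\epsilon$ and says ``it is easy to check that they assemble into a $(n-3)$-cycle''.

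For $\neck(k,4k+2)$ the paper gives no proof at all (``we leave it to the interested reader''), so your (E)/(D)/(F) case split already goes well beyond it. The slack-$2$ analysis and the count $(k-1)+\lfloor k/2\rfloor$ for type (F) are correct; I checked them against the explicit dihedral orbit count for each gap-separation $j\in\{1,\dots,\lfloor k/2\rfloor\}$. One concrete correction, though: your guess that the $\lfloor k/2\rfloor$ loops are the type-(F) necklaces ``whose two long vectors sit in mirror-symmetric positions'' is backwards. For $k=2$ (and $k=3$), a direct computation of $T$ shows that the $T$-fixed class is the $\{LL,RR\}$ orbit---both length-$2$ vectors on the \emph{clockwise}-facing stone of their respective distance-$4$ gaps---which has a rotational self-symmetry but no reflection symmetry. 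The reflection-symmetric class $\{LR\}$ instead gets sent by $T$ (via the FIX step collapsing one distance-$4$ gap to $3$) into a type-(D) necklace and sits on the long cycle. In general the loops are exactly the $LL$-type classes, one for each value of $j$, giving $\lfloor k/2\rfloor$ of them as required; once you adopt this description, the verification that $T$ rotates each such necklace into itself goes through just as in your $\neck(k,4k)$ argument.
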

\begin{proof}
A $(1,n)$-necklace is determined by a choice of an odd number $3\leq d\leq n-1$ (the length of the arc along which the two stones face each other) and a choice of $\epsilon\in\{1,2\}$ (the length of the vectors at both stones which must be the same due to the parity constraints), with the restriction that if $d=3$ then $\epsilon=1$. If we denote the resulting necklace $A_{n,d}^\epsilon$ then
$$TA_{n,n-1}^1=A_{n,3}^1, \quad TA_{n,d}^1=A_{n,n-d+2}^2 \ (3\leq d\leq n-3), \quad TA_{n,d}^2=A_{n,n-d+4}^1 \ (5\leq d\leq n-1)$$
and it is easy to check that they assemble into a $(n-3)$-cycle.

An argument as in Lemma \ref{lemma:neck-general} shows that there is just one $(k,4k)$-necklace $N$, with distances between stones alternating between $3$ (stones facing each other) and $1$ (stones facing away) and all vectors of length $1$. It satisfies $TN=N$.

The analysis of the last case again requires the enumeration of all possible cases and we leave it to the interested reader.
\end{proof}

The following is our main conjecture about $\neck(k,n)$.

\begin{conjecture}
\label{con:cyc-len}
The length of every cycle in the graph $\neck(k,n)$ divides $n-3k$. In other words, for every $(k,n)$-necklace $N$ we have $T^{n-3k}N=N$.
\end{conjecture}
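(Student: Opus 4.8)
The plan is to connect the necklace dynamics to an explicit invariant that must change by exactly $3$ (mod $n$) per stone under one application of $T$, summed over all $2k$ stones, and then argue that after $n-3k$ steps every stone has returned to its starting configuration.

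\medskip

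First I would set up a bookkeeping device. Lift the circle of length $n$ to $\er$ (or to $\zet$ if we track integer positions), and to each necklace assign, for each stone $s$, a real number $\widetilde{x}_s$ which is a chosen lift of its position, together with its stone vector $v_s\in\{-2,-1,1,2\}$. The JUMP step changes $\widetilde{x}_s$ by $v_s$; the TURN step sends $v_s$ to the value $\tau(v_s)$ where $\tau(-2)=1,\tau(-1)=2,\tau(1)=-2,\tau(2)=-1$; the FIX step possibly further shortens some vectors. The key quantitative observation I would try to prove is that \emph{the ordered cyclic sequence of positions is preserved as a cyclic sequence of stones} (stones never pass one another, by the last necklace condition), so we can speak unambiguously of "stone $s$ after $r$ applications of $T$". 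Then the total displacement $\sum_s(\text{position of }s\text{ after one }T\ -\ \text{position before})=\sum_s v_s$, and I would compute $\sum_s v_s$.

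\medskip

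Here is where the arithmetic heart lies. Consecutive stones face in opposite directions, so around the circle the stones alternate between "clockwise-facing" and "counter-clockwise-facing"; since there are $2k$ of them, there are $k$ of each type. Pair each clockwise-facing stone with the counter-clockwise-facing stone immediately to its (say) right — these are exactly the pairs "facing towards each other" — and also pair it with the one on its left (the pairs "facing away"). Using the parity conditions (distance odd for facing-away pairs; distance plus vector lengths odd for facing-towards pairs) together with the fact that the circle has even length $n$ and the total of all the inter-stone gaps is $n$, one gets a congruence: the sum of all the pairwise gaps equals $n$, and each facing-towards gap satisfies $d\equiv 1+|v|+|v'|\pmod 2$, each facing-away gap is odd. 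I would massage this into a statement that $\sum_s v_s\equiv n-3k$ in an appropriate sense, or more precisely that after iterating, $T^{n-3k}$ returns every stone to its starting point \emph{and} its starting vector. The vector part needs separate care: $\tau$ has order $4$ on $\{-2,-1,1,2\}$, but the FIX operation breaks the clean cycle, so I would argue that the \emph{sign} of $v_s$ flips at every step (both TURN and FIX preserve sign), hence returns after an even number of steps, and that $n-3k$ has the right parity — indeed $n$ is even, so $n-3k\equiv k\pmod 2$, which forces me to treat even $k$ and odd $k$; here I would invoke the structure in Lemma~\ref{lemma:neck-small} (isolated looped vertices for $\neck(k,4k+2)$) as a consistency check and likely realize the true claim is only that the \emph{cycle length divides} $n-3k$, so fixed points are permitted and the parity issue dissolves.

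\medskip

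The main obstacle I expect is controlling the FIX step, since it is the only non-reversible-looking, non-uniform part of $T$, and it is precisely what makes a naive "each stone advances by a net $3$ per period" argument fail. Concretely, FIX only triggers when two stones land at distance exactly $3$ facing each other with a length-$2$ vector, and the inverse $T^{-1}$ given in the proof of Lemma~\ref{lemma:neck-general} already encodes how to undo it; I would try to show that over a full passage around a cycle of $\neck(k,n)$ the FIX corrections occur in matched pairs whose net contribution to total displacement is zero, perhaps by a discrete Gauss--Bonnet / winding-number argument on the lifted positions. If that fails, the fallback is to prove the conjecture by reduction to the pattern side via Theorem~\ref{thm:necklace-to-patterns} (referenced but not in the excerpt), translating "cycle length divides $n-3k$" into a statement about the $R$- and $N$-operation cycles on proper patterns of fixed $\mu$-invariant, where Lemma~\ref{lem:zr} supplies the shift amount and one tracks the exponent $m\mapsto m-1$ carefully; but since the excerpt bills this as a conjecture, I would be honest that a complete proof is not yet in hand and present the displacement computation as the decisive partial step.
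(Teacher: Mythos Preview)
The paper does not prove this statement: it is explicitly stated as a \emph{conjecture}, supported only by computer verification for all even $n\leq 36$ (Table~3 in the Appendix). There is therefore no proof in the paper to compare your proposal against.

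As for your strategy itself, it is a reasonable first line of attack, and you are right to flag it as incomplete. A few concrete obstacles deserve mention. First, the quantity $\sum_s v_s$ is not an invariant of the dynamics and does not obviously relate to $n-3k$: with $k$ stones carrying positive vectors in $\{1,2\}$ and $k$ carrying negative vectors in $\{-1,-2\}$, the sum lies in $[-k,k]$ and varies from necklace to necklace, so ``massaging'' it into $n-3k$ would require a genuinely new idea. Second, even ignoring FIX, the orbit of a single vector under TURN has period~$2$ (e.g.\ $1\mapsto -2\mapsto 1$), and the net displacement over two steps is $\pm 1$, not $\pm 3$; so the slogan ``each stone advances by a net $3$'' is not what the clean part of the dynamics gives you. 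Third, your parity worry about odd $n-3k$ is likely a red herring, since necklaces are identified up to isometries of the circle \emph{including reflections}, and a reflection flips all vector signs; the boundary case $\neck(1,n)$ with its cycle of length $n-3$ (often odd) already relies on this. Finally, the heart of the difficulty is exactly where you locate it: controlling FIX. Your hope that FIX corrections ``occur in matched pairs'' over a full cycle is pure speculation at this point, and the paper offers no mechanism for it. In short, you have correctly identified the shape of the problem and its main obstruction, but nothing here closes the gap, which is consistent with the paper leaving the statement open.
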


This conjecture was experimentally verified for all even $n\leq 36$, see Table 3 in the Appendix. 

It is now time to explain what necklaces have to do with patterns and what Conjecture \ref{con:cyc-len} has to do with Conjecture \ref{thm:genfunmedium}.

Intuitively, $(k,n)$-necklaces are meant to correspond to reducible proper patterns $\nicep$ of length $n$ and $\mu(\nicep)=k$. The operation $T$ mimics the reduction $\nicep\to\nicep^R$, although the details of this correspondence are a bit more complicated (see proof of Theorem \ref{thm:necklace-to-patterns}). The lengths of cycles in the necklace graph $\neck(k,n)$ determine the constants $a$ in the recursive equations (\ref{recursiveeq}) and therefore also the exponents in the denominators of $f_n(t)$ (Conjecture \ref{thm:genfunmedium}). A precise statement is the following.

\begin{figure}
\begin{center}
\begin{tabular}{cc}
\includegraphics{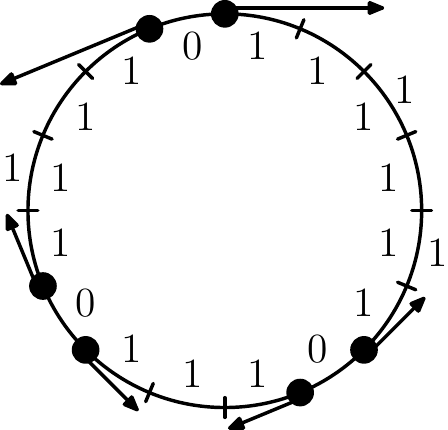} & \includegraphics{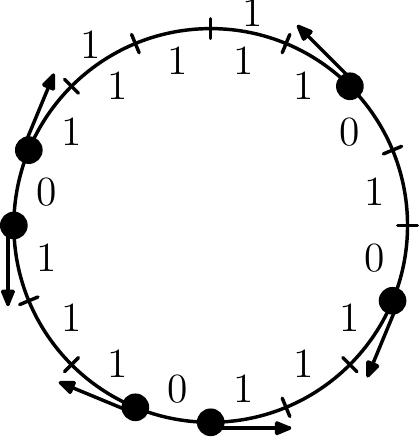}\\
a) & b)
\end{tabular}
\caption{\label{fig:neck2} The correspondence between necklaces and patterns. If $N$ is the necklace then the numbers inside the circle form the second row of the pattern $UN$ and the numbers outside form its first row (only $1$'s are shown in the first row, the remaining entries are $0$'s). The second row has a block between each pair of stones facing each other. Over each block of length greater than $3$ the number of outermost $0$'s in the first row equals the length of the stone vector.}
\end{center}
\end{figure}

\begin{theorem}
\label{thm:necklace-to-patterns}
Let $g_{i,n}$ be the any common multiple of the lengths of all cycles in the graph $\neck(i,n)$. Suppose $\nicep$ is a proper pattern of even length $n$ and $\mu(\nicep)=k$. Then the generating function $f_\nicep(t)$ (see (\ref{eqn:fp})) is of the form
$$f_\nicep(t)=\frac{h_\nicep(t)}{1-(-1)^{n/2}t^2}\cdot\prod_{i=1}^{k}\frac{1}{1-t^{2g_{i,n}}}$$
for some polynomial $h_\nicep(t)$.
\end{theorem}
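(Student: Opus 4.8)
The plan is to make precise the correspondence sketched in Figure~\ref{fig:neck2} and use it to convert the cycle lengths of $\neck(k,n)$ into the periods $a$ of the recursions~(\ref{recursiveeq}). Let $U$ send a $(k,n)$-necklace $N$ to the pattern $UN$ of Figure~\ref{fig:neck2}: the $2k$ stones cut the circle into arcs, a block of the second row sits on each arc whose endpoints face towards each other, and above a block of length $>3$ the number of outermost $0$'s of the first row on each side equals the length of the corresponding stone vector. First I would verify, directly from the definitions, that $U$ is a bijection from $(k,n)$-necklaces onto reducible proper patterns of length $n$ with $\mu$-invariant $k$: the four parity/distance constraints defining a necklace translate exactly into the conditions defining a proper pattern (blocks of the second row have length $\ge 3$; a block of length exactly $3$ carries no run; a longer block carries a nice run starting over its $2$nd or $3$rd position; the first row consists only of singletons because a stone sits between two blocks). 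For a reducible proper pattern $\nicep$ with $\mu(\nicep)=k$, let $\rho(\nicep)$ be the result of performing one operation of type $R$ and then, while the outcome is irreducible, repeatedly performing an operation of type $N$ at the middle of a block in the first row; by Propositions~\ref{prop:propervn} and~\ref{prop:properr} every intermediate pattern is proper with $\mu$-invariant $k$, and since an $N$-operation at the middle of a first-row block strictly decreases the number of first-row blocks, the process terminates, so $\rho$ is a well-defined self-map of this finite set. The crucial, and I expect hardest, step is to check that $U$ conjugates $T$ to $\rho$, i.e. $\rho(UN)=U(TN)$: JUMP moves block boundaries exactly as $R$ does, TURN encodes the left--right asymmetry of the $0$'s above a long block, FIX handles a block whose length has dropped to $3$ (forcing vector length $1$), and the $N$-cleanup passes to the reducible form, which the stone positions already record.

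Granting this, consider the ``reduction digraph'' on all proper patterns of length $n$ with $\mu$-invariant $k$, with an edge $\nicep\to\nicep^R$ out of each reducible $\nicep$ and $\nicep\to\nicep^N$ (for the chosen $N$) out of each irreducible $\nicep$; this is the digraph used in the proof of Proposition~\ref{prop:weakrefined}, a disjoint union of directed cycles with in-trees attached. Since every $N$-edge strictly decreases the number of first-row blocks, every cycle contains a reducible pattern, and its reducible patterns, listed in cyclic order, are $\nicep_0,\rho(\nicep_0),\rho^2(\nicep_0),\dots$, with exactly one $R$-edge between consecutive ones. By $U$ and Lemma~\ref{lemma:neck-general}, every reducible proper pattern lies on such a cycle, whose number of reducible vertices — equivalently its number of $R$-edges — equals the length $\ell$ of the matching cycle of $\neck(k,n)$; in particular $\ell\mid g_{k,n}$.

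Now I would induct on $k=\mu(\nicep)$. The case $k=0$ is the computation already in the proof of Proposition~\ref{prop:weakrefined}: $f_\nicep(t)=(a+bt)/(1-(-1)^{n/2}t^2)$. For $k>0$, first let $\nicep$ lie on a cycle of the reduction digraph, of length $\ell\mid g_{k,n}$. Travelling once around the cycle and applying Lemma~\ref{lem:zr} at each $R$-edge (shift by $1$, sign $(-1)^{\#\text{ones in first row}}$) and Lemma~\ref{lem:zvn} at each $N$-edge (no shift, plus one extra term $Z(\cdot^{V};\cdot)$ involving a proper pattern of $\mu$-invariant $k-1$ by Proposition~\ref{prop:propervn}) yields, for all large $m$, a recursion
\[
Z(\nicep;m)=\sigma\,Z(\nicep;m-\ell)+\sum_{\mathcal R}\sum_{j}\pm\,Z(\mathcal R;m-c_{\mathcal R,j}),\qquad \sigma\in\{\pm1\},
\]
with each $\mathcal R$ proper of $\mu$-invariant $k-1$ and $0\le c_{\mathcal R,j}\le\ell$. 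Passing to generating functions (the finitely many low-degree coefficients exceptional because Lemmas~\ref{lem:zvn},~\ref{lem:zr} need $m\ge2$ are absorbed into numerators) and using the inductive hypothesis that each $f_{\mathcal R}(t)$ has denominator $(1-(-1)^{n/2}t^2)\prod_{i=1}^{k-1}(1-t^{2g_{i,n}})$ gives
\[
f_\nicep(t)=\frac{(\text{polynomial})}{(1-\sigma t^\ell)\,(1-(-1)^{n/2}t^2)\prod_{i=1}^{k-1}(1-t^{2g_{i,n}})}.
\]
Since $\ell\mid g_{k,n}$ we have $1-\sigma t^\ell\mid 1-t^{2g_{k,n}}$ (for $\sigma=1$ because $1-t^\ell\mid 1-t^{g_{k,n}}\mid 1-t^{2g_{k,n}}$; for $\sigma=-1$ because $1+t^\ell\mid 1-t^{2\ell}\mid 1-t^{2g_{k,n}}$), so multiplying numerator and denominator by $(1-t^{2g_{k,n}})/(1-\sigma t^\ell)$ puts $f_\nicep(t)$ in the required form.

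Finally, if $\nicep$ is not on a cycle (then necessarily irreducible, since every reducible proper pattern lies on a cycle), following its unique out-edges until reaching a cycle vertex $\nicep'$ gives $Z(\nicep;m)=\pm Z(\nicep';m-c)+(\mu{=}k{-}1\text{ terms})$ with $c\ge0$; as $f_{\nicep'}(t)$ is already in the required form and the $\mu=k-1$ contributions have a denominator dividing the required one, so does $f_\nicep(t)$. This completes the induction, and since $g_{k,n}$ is governed by the cycle lengths of $\neck(k,n)$, the theorem is exactly the bridge by which Conjecture~\ref{con:cyc-len} (together with the boundary computations of Lemma~\ref{lemma:neck-small}) would yield Conjecture~\ref{thm:genfunmedium}.
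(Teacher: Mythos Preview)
Your proposal is correct and follows essentially the same approach as the paper: the bijection $U$ between $(k,n)$-necklaces and reducible proper patterns with $\mu=k$, the conjugacy $\rho(UN)=U(TN)$ (which the paper writes as $UTN=S((UN)^R)$ with $S$ applying $N$ at every first-row block), and the induction on $\mu$ that converts cycle lengths in $\neck(k,n)$ into the shifts of the recursions~(\ref{recursiveeq}). Your handling of the sign is slightly slicker---you go once around the actual cycle of length $\ell$ and use $1-\sigma t^\ell\mid 1-t^{2g_{k,n}}$, whereas the paper iterates $g_{k,n}$ times and then doubles to kill the sign---and your explicit observation that off-cycle vertices of the reduction digraph must be irreducible (because $\rho$, being conjugate to the bijection $T$, is itself a bijection on the reducible patterns) is a nice point the paper leaves implicit.
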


Before proving this result first observe:
\begin{theorem}
Conjecture \ref{con:cyc-len} implies Conjecture \ref{thm:genfunmedium}.
\end{theorem}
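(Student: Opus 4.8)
The plan is to substitute the hypothesis into Theorem~\ref{thm:necklace-to-patterns} and then reassemble $f_n(t)$ from the rational functions $f_\nicep(t)$ attached to the initial patterns. Concretely, Conjecture~\ref{con:cyc-len} says that every cycle of $\neck(i,n)$ has length dividing $n-3i$, so $n-3i$ is a common multiple of those lengths and may serve as the $g_{i,n}$ in Theorem~\ref{thm:necklace-to-patterns}. Recall (Section~\ref{sect:weakproof}) that for all but finitely many $m$ the number $Z(P_m\times C_n)$ is a $\zet$-linear combination of the $Z(\nicep;m)$ over initial patterns $\nicep$; passing to generating functions, $f_n(t)=\sum_\nicep c_\nicep f_\nicep(t)+p(t)$ for a polynomial $p$, where all the $\nicep$ occurring are proper (Lemma~\ref{lem:initialproper}) with $\mu(\nicep)\le\lfloor n/4\rfloor$ (Lemma~\ref{lem:mu0patterns}). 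Theorem~\ref{thm:necklace-to-patterns} with the above choice of $g_{i,n}$ then gives that $f_n(t)$ is rational with denominator dividing $\big(1-(-1)^{n/2}t^2\big)\prod_{i=1}^{\lfloor n/4\rfloor}\big(1-t^{2(n-3i)}\big)$.

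It remains to read this bound off in the two parities of $n/2$. If $n=4N+2$, then $n/2$ is odd, so $1-(-1)^{n/2}t^2=1+t^2$; with $\lfloor n/4\rfloor=N$ the exponents $2(n-3i)=8N+4-6i$ run through $8N-2,8N-8,\dots,2N+4$, which is exactly the denominator of $f_{4N+2}$ in Conjecture~\ref{thm:genfunmedium} — and since $\neck(N,4N+2)$ contains a cycle of length $N+2=n-3N$ (Lemma~\ref{lemma:neck-small}) nothing degenerates, so this half is finished. If $n=4N$, then $n/2$ is even, so $1-(-1)^{n/2}t^2=1-t^2$; the top index $i=N$ is now special, because $\neck(N,4N)$ is a single vertex with a loop (Lemma~\ref{lemma:neck-small}), so $g_{N,4N}=1$ and the $i=N$ factor collapses from $1-t^{2N}$ to $1-t^2$. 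Dropping that collapsed factor leaves the exponents $8N-6,8N-12,\dots,2N+6$ for $i=1,\dots,N-1$, matching the denominator of $f_{4N}$.

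The genuine obstacle is making that last step legitimate. Theorem~\ref{thm:necklace-to-patterns} only gives upper bounds for the individual denominators, and for the extreme patterns ($n=4N$, $\mu(\nicep)=N$) — which do occur among initial patterns: up to cyclic shift and reflection there is one class of them, with all-zero first row and a second row consisting of $N$ blocks of length $3$ — the bound is loose. Reopening the recursion~(\ref{recursiveeq}) along the length-one cycle of $\neck(N,4N)$ shows that such an $f_\nicep$ acquires only a linear factor $1\pm t$ (a divisor of $1-t^2$) rather than the factor $1-t^{2N}$ the crude count would force. One must then check that, once all initial patterns are summed, this stray linear factor is absorbed — equivalently, that the prefactor of $f_{4N}$ is exactly $1-t^2$ and not a higher power. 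This absorption is not delivered by Conjecture~\ref{con:cyc-len} alone; it is a small instance of the cancellations asserted in full by Conjecture~\ref{conjecture:periodic}, and pinning it down (essentially a structured computation involving the one family of extreme initial patterns) is where the work lies.
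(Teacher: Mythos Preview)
Your argument is the paper's argument: apply Theorem~\ref{thm:necklace-to-patterns} to the initial patterns with $g_{i,n}=n-3i$ (legitimate under Conjecture~\ref{con:cyc-len}) and compare the resulting denominator with the one in Conjecture~\ref{thm:genfunmedium}. For $n\equiv 2\pmod 4$ the match is exact and you are done, just as in the paper.

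For $n=4N$ you are more scrupulous than the paper. The paper simply writes
\[
f_n(t)=\frac{h_n(t)}{1-(-1)^{n/2}t^2}\cdot\prod_{i=1}^{\lfloor n/4\rfloor}\frac{1}{1-t^{2(n-3i)}}
\]
and asserts that this \emph{is} Conjecture~\ref{thm:genfunmedium}, without remarking that for $n=4N$ the product runs up to $i=N$ and so carries an extra factor $1-t^{2N}$ that is absent from the conjectured denominator $(1-t^2)\prod_{i=1}^{N-1}(1-t^{8N-6i})$. Your observation that $\neck(N,4N)$ is a single loop lets you shrink that factor to $1-t^2$, but as you correctly note this still leaves $(1-t^2)^2$ rather than the single $(1-t^2)$ the conjecture demands, and Conjecture~\ref{con:cyc-len} by itself does not supply the missing cancellation. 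So your third paragraph is an honest flag on a point the paper's own proof passes over in silence; you have not introduced a new gap, you have exposed one that is already there. The paper offers no further argument at this spot, so if you want a proof that is complete where the paper's is not, the sign-tracking you sketch for the unique $\mu=N$ initial pattern (or else accepting the marginally weaker conclusion with the extra factor) is indeed what remains to be done.
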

\begin{proof}
The sequence $Z(P_m\times C_n)$ is a linear combination of sequences $Z(\nicep;m)$ for proper (in fact initial) patterns $\nicep$ of length $n$. Theorem \ref{thm:necklace-to-patterns} therefore implies that 
$$f_n(t)=\frac{h_n(t)}{1-(-1)^{n/2}t^2}\cdot\prod_{i=1}^{\lfloor n/4\rfloor}\frac{1}{1-t^{2g_{i,n}}}$$
for some polynomial $h_n(t)$. If Conjecture \ref{con:cyc-len} is true then we can take $g_{i,n}=n-3i$, thus obtaining the statement of Conjecture \ref{thm:genfunmedium}.
\end{proof}

% Note that if Conjecture \ref{con:cyc-len} is true then we can take $g_{i,n}=n-3i$, thus coming very close to the Medium Result (Conjecture \ref{thm:genfunmedium}). The discrepancy in the exponents (here they are twice the ones of Conjecture \ref{thm:genfunmedium}) appears because the necklace graph only keeps track of how many reductions are performed but not of how the sign changes during those reductions (see Lemma \ref{lem:zr}).

\begin{proof}[Proof of Theorem \ref{thm:necklace-to-patterns}.]
For $k\geq 1$ and even $n$ let $\prop(k,n)$ denote the set of proper patterns $\nicep$ of length $n$ and such that $\mu(\nicep)=k$. Moreover, let $\prop_0(k,n)\subset\prop(k,n)$ consist of patterns which do not have any block in row $1$. These are exactly the reducible patterns.

There is a map
$$S:\prop(k,n)\to\prop_0(k,n)$$
defined as follows. If $\nicep\in\prop(k,n)$ then all blocks in the first row of $\nicep$ have length $3$. We apply the operation of type $N$ in the middle of every such block and define $S\nicep$ to be the resulting pattern. It has no blocks in the first row and $\mu(S\nicep)=\mu(\nicep)$ by Proposition~\ref{prop:propervn}.

Next we define a map
$$Q:\prop_0(k,n)\to\neck(k,n).$$

Note that a pattern $\nicep\in\prop_0(k,n)$ is determined by the positions of blocks in the second row and, for every endpoint of a block, the information whether the outermost $1$ in the first row (if any) is located above the $2$nd or $3$rd position of the block. This already determines the whole run above a block (because it is an alternating run of $0$'s and $1$'s).

Now we transcribe it into a necklace $Q\nicep$ as follows (see Fig.\ref{fig:neck2}). Label the unit intervals of a circle of length $n$ with the symbols from the second row of $\nicep$. For every block place two stones bounding that block and facing towards each other. The lengths of the vectors at those stones are determined by the rule:
\begin{itemize}
\item if the stone bounds a block of length $3$ then the length of its vector is $1$,
\item otherwise the length of a stone vector is the number of outermost $0$'s in the first row of $\nicep$ over the edge of the block bounded by the stone.
\end{itemize}
If two stones face away from each other then ``between them'' the second row of $\nicep$ contains a run $0101\cdots10$ of odd length. If two stones face towards each other then the length of the block between them is either $3$ or it is the odd length of $101\cdots01$ plus $p_1+p_2$ where $p_i$ are their stone vector lengths. It verifies that $Q\nicep$ is a $(k,n)$-necklace.

The map $Q$ is a bijection and we let
$$U:\neck(k,n)\to\prop_0(k,n)$$
be its inverse. More specifically, the second row of $UN$ is obtained by placing a block of $1$'s between every pair of stones that face each other and an alternating run $010\cdots10$ between stones facing away. In the first row of $UN$, over each block, we place either $0$'s (if the block has length $3$) or an alternating sequence $101\cdots01$ leaving out as many outermost positions as dictated by the stone vector lengths. We fill the remaining positions in the first row with $0$'s. The construction is feasible thanks to the parity conditions satisfied by $N$.

All the maps are defined in such a way that for every $(k,n)$-necklace $N$ we have
\begin{equation}
\label{claim-blah}
TN=QS((UN)^R)\quad \textrm{or equivalently}\quad UTN=S((UN)^R).
\end{equation}
To see this consider how the reduction operation $(\cdot)^R$ and the map $S$ change the neighbourhood of an endpoint of a block in the second row of $UN$. The argument is very similar to the proof of Proposition~\ref{prop:properr} and the details are left to the reader.

Now we complete the proof by induction on $\mu(\nicep)$. The case $\mu(\nicep)=0$ was dealt with in the proof in Section \ref{sect:weakproof}. Now suppose $k=\mu(\nicep)\geq 1$. If $\nicep\in\prop(k,n)$ is any pattern then by Propositions \ref{lem:zvn} and \ref{prop:propervn} we have an equation
$$Z(\nicep;m)=\pm Z(S\nicep;m)+\sum_\mathcal{R}\pm Z(\mathcal{R};m)$$
for some patterns $\mathcal{R}$ which satisfy $\mu(\mathcal{R})=k-1$. That means it suffices to prove the result for patterns in $\prop_0(k,n)$. Every such pattern is of the form $UN$ for some $(k,n)$-necklace $N$. Equation \eqref{claim-blah} and Propositions \ref{lem:zvn}, \ref{lem:zr} and \ref{prop:propervn} lead to an equation
\begin{eqnarray*}
Z(UN;m)&=&\pm Z((UN)^R;m-1)\\
&=&\pm Z(S((UN)^R);m-1)+\sum_\mathcal{R}\pm Z(\mathcal{R};m-1)\\
&=&\pm Z(UTN;m-1)+\sum_\mathcal{R}\pm Z(\mathcal{R};m-1)\end{eqnarray*}
with $\mathcal{R}$ as before. Now a $g_{k,n}$-fold iterated application of this argument for $N, TN, \ldots, T^{g_{k,n}}N=N$ produces an equation
$$Z(UN;m)=\pm Z(UN;m-g_{k,n})+\sum_\mathcal{R}\pm Z(\mathcal{R};m-b_\mathcal{R})$$
and its double application allows to avoid the problem of the unknown sign, that is we obtain
$$Z(UN;m)=Z(UN;m-2g_{k,n})+\sum_\mathcal{R'}\pm Z(\mathcal{R'};m-b_\mathcal{R'}).$$
It follows that the generating function of $Z(UN;m)$ can be expressed using combinations of the same rational functions which appeared in the generating functions for patterns of $\mu$-invariant $k-1$ together with $1/(1-t^{2g_{k,n}})$. That completes the proof.
\end{proof}

%========================================================================
\newpage
\section{Appendix}
\subsection*{Table 1.} Some values of $Z(P_m\times C_n)$:	
\begin{center}
\begin{tabular}{l|ccccccccccccc}
$m$ $\backslash$ $n$ & 2 & 3 & 4 & 5 & 6 & 7 & 8 & 9 & 10 & 11 & 12 & 13 & 14 \\
\hline
0 & 1 & 1 & 1 & 1 & 1 & 1 & 1 & 1 & 1 & 1 & 1 & 1 & 1  \\
% \hline
1 & -1 & -2 & -1 & 1 & 2 & 1 & -1 & -2 & -1 & 1 & 2 & 1 & -1  \\
% \hline
2 & -1 & 1 & 3 & 1 & -1 & 1 & 3 & 1 & -1 & 1 & 3 & 1 & -1\\
% \hline
3 & 1 & 1 & -3 & 1 & 1 & 1 & 5 & 1 & 1 & 1 & -3 & 1 & 1\\
% \hline
4 & 1 & -2 & 5 & 1 & 4 & 1 & 5 & -2 & 1 & 1 & 8 & 1 & 1 \\
% \hline
5 & -1 & 1 & -5 & 1 & -1 & 1 & 3 & 1 & 9 & 1 & -5 & 1 & -1\\
% \hline
6 & -1 & 1 & 7 & 1 & 1 & 1 & 7 & 1 & -1 & 1 & 7 & 1 & 13\\
% \hline
7 & 1 & -2 & -7 & 1 & 4 & 1 & 1 & -2 & 1 & 1 & 8 & 1 & 1\\
% \hline
8 & 1 & 1 & 9 & 1 & 1 & 1 & 1 & 1 & 1 & 1 & 9 & 1 & 1\\
% \hline
9 & -1 & 1 & -9 & 1 & -1 & 1 & -1 & 1 & -11 & 1 & -9 & 1 & 13\\
% \hline
10 & -1 & -2 & 11 & 1 & 2 & 1 & 3 & -2 & -1 & 1 & 14 & 1 & -1\\
% \hline
11 & 1 & 1 & -11 & 1 & 1 & 1 & -3 & 1 & 1 & 1 & -11 & 1 & 15\\
% \hline
12 & 1 & 1 & 13 & 1 & 1 & 1 & 5 & 1 & 11 & 1 & 13 & 1 & 1
\end{tabular}
\end{center}

\subsection*{Table 2.} Some initial generating functions $f_{n}(t)$ for even $n$ are given below in reduced form. By $\Phi_k(t)$ we denote the $k$-th cyclotomic polynomial ($\Phi_1(t)=t-1$, $\Phi_2(t)=t+1$).

{\small
\begin{eqnarray*}
f_2(t)&=& \frac{-(t-1)}{\Phi_4(t)}\\
f_4(t)&=& \frac{-(t^2+1)}{\Phi_1(t)\Phi_2(t)^2}\\
f_6(t)&=& \frac{-(t^4+2t^3+2t+1)}{\Phi_1(t)\Phi_3(t)\Phi_4(t)}\\
f_8(t)&=& \frac{-(t^6-t^5+2t^4+6t^3+2t^2-t+1)}{\Phi_1(t)\Phi_2(t)^2\Phi_{10}(t)}\\
f_{10}(t)&=& \frac{q_{10}(t)}{\Phi_1(t)\Phi_4(t)\Phi_7(t)\Phi_8(t)}\\
f_{12}(t)&=& \frac{q_{12}(t)}{\Phi_1(t)\Phi_2(t)^2\Phi_3(t)\Phi_6(t)^2\Phi_{18}(t)}\\
f_{14}(t)&=& \frac{q_{14}(t)}{\Phi_1(t)\Phi_4(t)\Phi_5(t)\Phi_{11}(t)\Phi_{16}(t)}\\
f_{16}(t)&=& \frac{q_{16}(t)}{\Phi_1(t)\Phi_2(t)^2\Phi_{10}(t)\Phi_{14}(t)\Phi_{26}(t)}\\
f_{18}(t)&=& \frac{q_{18}(t)}{\Phi_1(t)\Phi_3(t)\Phi_4(t)\Phi_5(t)\Phi_8(t)\Phi_9(t)\Phi_{12}(t)\Phi_{15}(t)\Phi_{24}(t)}\\
f_{20}(t)&=& \frac{q_{20}(t)}{\Phi_1(t)\Phi_2(t)^2\Phi_4(t)\Phi_7(t)\Phi_{14}(t)\Phi_{22}(t)\Phi_{34}(t)}\\
f_{22}(t)&=& \frac{q_{22}(t)}{\Phi_1(t)\Phi_4(t)\Phi_7(t)\Phi_{13}(t)\Phi_{19}(t)\Phi_{20}(t)\Phi_{32}(t)}
\end{eqnarray*}
}

The longer numerators are as follows.
{\small
\begin{eqnarray*}
q_{10}(t)&=&-(t^{12}-t^{11}+t^8+9t^7+9t^5+t^4-t+1)\\
q_{12}(t)&=&-(t^{14}+2t^{13}+3t^{12}-3t^{11}+8t^{10}-5t^9+6t^8+6t^7+6t^6-5t^5+8t^4-3t^3+3t^2+2t+1)\\
q_{14}(t)&=&-(t^{24}-t^{19}+14t^{18}+14t^{17}+29t^{16}+42t^{15}+42t^{14}+55t^{13}+56t^{12}\\
&&		     +55t^{11}+42t^{10}+42t^9+29t^8+14t^7+14t^6-t^5+1)\\
q_{16}(t)&=&-(t^{28}-2t^{27}+5t^{26}+5t^{24}-2t^{23}+9t^{22}-7t^{21}+39t^{20}-37t^{19}\\
&&                   +44t^{18}-25t^{17}+30t^{16}-24t^{15}+26t^{14}-24t^{13}+30t^{12}\\
&&                   -25t^{11}+44t^{10}-37t^9+39t^8-7t^7+9t^6-2t^5+5t^4+5t^2-2t+1)\\
q_{18}(t)&=&-(t^{38}+2t^{37}-t^{36}+2t^{35}+6t^{34}-2t^{33}+2t^{32}+30t^{31}-2t^{30}\\                                                            
&&                   +t^{29}+70t^{28}-t^{27}+t^{26}+92t^{25}-t^{24}+t^{23}+130t^{22}-t^{21}\\                                                            
&&                   +168t^{19}-t^{17}+130t^{16}+t^{15}-t^{14}+92t^{13}+t^{12}-t^{11}+70t^{10}\\                                                        
&&                   +t^9-2t^8+30t^7+2t^6-2t^5+6t^4+2t^3-t^2+2t+1)\\
q_{20}(t)&=&-(t^{46}-2t^{45}+6t^{44}-10t^{43}+19t^{42}-18t^{41}+34t^{40}-40t^{39}\\
&&                   +64t^{38}-28t^{37}+60t^{36}-31t^{35}+120t^{34}-96t^{33}+189t^{32} \\
&&                   -147t^{31}+240t^{30}-195t^{29}+283t^{28}-230t^{27}+258t^{26} \\
&&                   -193t^{25}+218t^{24}-208t^{23}+218t^{22}-193t^{21}+258t^{20} \\
&&                   -230t^{19}+283t^{18}-195t^{17}+240t^{16}-147t^{15}+189t^{14} \\
&&                   -96t^{13}+120t^{12}-31t^{11}+60t^{10}-28t^9+64t^8-40t^7 \\
&&                   +34t^6-18t^5+19t^4-10t^3+6t^2-2t+1)\\
q_{22}(t)&=&-(t^{62}+t^{61}+t^{58}+t^{57}-t^{55}+22t^{54}+\cdots\cdots+22t^8-t^7+t^5+t^4+t+1)
\end{eqnarray*}
}

\subsection*{Table 3.} The decomposition of the directed graph $\neck(k,n)$ into a disjoint union of cycles. Here $l^p$ stands for $p$ copies of the cycle $C_l$.
\begin{center}
\begin{tabular}{l|ccccccc}
$n$ $\backslash$ $k$ & 1 & 2 & 3 & 4 & 5 & 6 & 7\\
\hline
4 & $1^1$ &  &  &  &  & &  \\
% \hline
6 & $3^1$ &  &  &  &  & &  \\
% \hline
8 & $5^1$ & $1^1$ &  &  &  &  & \\
% \hline
10 & $7^1$ & $1^14^1$ &  &  &  & & \\
% \hline
12 & $9^1$ & $2^13^26^1$ & $1^1$ &  & & & \\
% \hline
14 & $11^1$ & $2^38^3$ & $1^15^1$ &  &  &  & \\
% \hline
16 & $13^1$ & $2^55^310^3$ & $7^4$ & $1^1$ &  &  &\\
% \hline
18 & $15^1$ & $1^12^912^6$ & $3^49^9$ & $1^26^1$ &  &  &\\
% \hline
20 & $17^1$ & $2^{14}7^414^6$ & $11^{22}$ & $1^14^38^5$ & $1^1$ &  &\\
% \hline
22 & $19^1$ & $2^{22}16^{10}$ & $13^{42}$ & $2^15^610^{21}$ & $1^27^1$ &  &\\
% \hline
24 & $21^1$ & $2^{30}9^518^{10}$ & $3^15^{10}15^{70}$ & $2^53^{10}4^36^912^{63}$ & $1^13^19^9$ & $1^1$ & \\
% \hline
26 & $23^1$ & $1^12^{42}20^{15}$ & $17^{120}$ & $2^{15}7^{28}14^{165}$ & $11^{49}$ & $1^38^1$ &\\
% \hline
28 & $25^1$ & $2^{55}11^622^{15}$ & $19^{186}$ & $2^{43}8^{43}16^{378}$ & $13^{194}$ & $1^35^410^{11}$  & $1^1$\\
% \hline
30 & $27^1$ & $2^{73}24^{21}$ & $7^{19}21^{270}$ & $2^{99}3^66^49^{90}18^{765}$ & $3^{26}5^{31}15^{613}$ & $1^13^54^{10}6^{13}12^{80}$  & $1^39^1$\\
% \hline
32 & $29^1$ & $2^{91}13^726^{21}$ & $23^{396}$ & $2^{217}5^{43}10^{83}20^{1480}$ & $17^{1750}$ & $2^17^{52}14^{435}$  & $1^511^{17}$ \\
% \hline
34 & $31^1$ & $1^12^{115}28^{28}$ & $5^125^{551}$  & $2^{429}11^{242}22^{2600}$   &  $19^{4334}$  & $2^74^{22}8^{124}16^{1791}$ & $1^113^{155}$\\
% \hline
36 & $33^1$ & $2^{140}15^830^{28}$ & $9^{31}27^{738}$ & $1^12^{809}8^{12}12^{216}24^{4483}$ & $1^13^{18}7^{247}21^{9693}$ & $2^{31}3^{83}6^{76}9^{316}18^{6100}$ &  $1^13^{18}5^{31}15^{970}$
\end{tabular}
\end{center}

%========================================================================
\newpage

\end{document}